\newtheorem{theorem}{Theorem}
\newtheorem{lemma}{Lemma}
\newcommand\bbR{\mathbb{R}}
\newcommand\bbN{\mathbb{N}}
\newcommand\bbC{\mathbb{C}}
\newcommand\bxi{\boldsymbol{\xi}}
\newcommand\bx{\boldsymbol{x}}
\newcommand\bp{\boldsymbol{p}}
\newcommand\bX{\boldsymbol{X}}
\newcommand\bbf{\boldsymbol{f}}
\newcommand\bb{\boldsymbol{b}}
\newcommand\br{\boldsymbol{r}}
\newcommand\imag{{{\mathrm i}}}
\newcommand\bA{{\boldsymbol{A}}}
\newcommand\bB{{\boldsymbol{B}}}
\newcommand\bD{{\boldsymbol{D}}}
\newcommand\bE{{\boldsymbol{E}}}
\newcommand\bG{{\boldsymbol{G}}}
\newcommand\bg{\boldsymbol{g}}
\newcommand\bM{{\boldsymbol{M}}}
\newcommand\bR{{\boldsymbol{R}}}
\newcommand\bQ{\boldsymbol{Q}}
\newcommand\bJ{\boldsymbol{J}}
\newcommand\bLambda{\boldsymbol{\Lambda}}
\newcommand\dd{\,\mathrm{d}}
\newcommand\He{\mathit{He}}
\newcommand\mH{\mathcal{H}}
\newcommand\bbZ{\mathbb{Z}}
\newcommand\mE{\mathcal{E}}
\newcommand\norm[1]{\left\|{#1}\right\|_2}
\newcommand\bI{\boldsymbol{I}}
\newcommand\bbD{\mathbb{D}}
\newcommand\bH{{\boldsymbol{H}}}
\newcommand\hbbf{\hat{\bbf}}
\newcommand\pd[2]{\dfrac{\partial {#1}}{\partial {#2}}}
\newcommand\comment[1]{}
\theoremstyle{remark} \newtheorem{remark}{Remark}}
\theoremstyle{corollary} \newtheorem{corollary}{Corollary}}
\DeclareMathOperator\re{Re}
\title[Suppression of Recurrence for Transport Equations]{Suppression
  of Recurrence in the Hermite-Spectral Method for Transport
  Equations}
\author{Zhenning Cai}
\address[Zhenning Cai]{Department of Mathematics, National University of Singapore,
  Level 4, Block S17, 10 Lower Kent Ridge Road, Singapore 119076}
\email{matcz@nus.edu.sg}
\author{Yanli Wang} \address[Yanli Wang]{School of Mathematics
  Science, Peking University, Beijing, China, 100871}
 \email{wylmath@pku.edu.cn}
\thanks{This work is supported by National University of Singapore Startup
Fund under Grant No. R-146-000-241-133. Yanli Wang is also supported
by the National Natural Scientific Foundation of China (Grant No.
11501042) and China Scholarship Council.}
\begin{document}
\begin{abstract}
  We study the unphysical recurrence phenomenon arising in the
  numerical simulation of the transport equations using
  Hermite-spectral method. From a mathematical point of view, the
  suppression of this numerical artifact with filters is theoretically
  analyzed for two types of transport equations. It is rigorously
  proven that all the non-constant modes are damped exponentially by
  the filters in both models, and formally shown that the filter does
  not affect the damping rate of the electric energy in the linear
  Landau damping problem. Numerical tests are performed to show the
  effect of the filters.

  \smallskip
  
  \noindent {\bf Keywords:} Hermite spectral method; filter;
  recurrence
\end{abstract}

\maketitle

\section{Introduction}

We consider a system with a large number of microscopic particles, and
the motion of these particles is governed by a force field. Instead of
the state of every individual particle, we are interested in the
collective behavior of these particles, such as the local density and
the mean velocity. To obtain such information, the system needs to be
properly modeled before carrying out the simulation.  Compared with
tracking the positions and velocities of all the particles as in the
method of molecular dynamics, a more efficient method is to use
the kinetic theory to describe the system in a statistical way. The basic
idea of the kinetic theory is to introduce a velocity distribution
function $f(\bx, \bxi, t)$, which denotes the number density of
particles in the position-velocity space, and the governing equation
for $f$ is
\begin{equation} 
  \pd{f}{t} + \bxi \cdot \nabla_{\bx} f
  + \bE \cdot \nabla_{\bxi} f = 0,
  \qquad t \in \bbR^+, \quad \bx \in \bbR^N,
    \quad \bxi \in \bbR^N,
\end{equation}
where $t$ denotes the time, $\bx$ denotes the position, and $\bxi$
stands for the velocity of the particles. The force field is given by
$\bE$. In this paper, we consider the one-dimensional case with
periodic boundary condition in space, and thus the equation for
$f(x,\xi,t)$ can be rewritten as
\begin{subequations} \label{eq:vlasov} 
  \begin{align}
    & \pd{f}{t} + \xi \pd{f}{x} + E \pd{f}{\xi} = 0,
      \qquad t \in \bbR^+, \quad x \in \bbR, \quad \xi \in \bbR, \\
    & \label{eq:periodic_bc} f(x,\xi,t) = f(x+D,\xi,t),
      \qquad \forall (x,\xi,t) \in \bbR \times \bbR \times \bbR^+,
  \end{align}
\end{subequations}
where $D$ is the period, and we assume that $E$ is also periodic and
independent of the velocity $\xi$, but may be a function of $t$ and
$x$. A typical example of this model is the Vlasov-Poisson (VP)
equation arising from the astrophysics and plasma physics, which
models the system formed by a large number of charged particles, and
the force is generated by a self-consistent electric field. Moreover,
Landau damping is one of the fundamental problems in the applications
of the VP equation. However, in the numerical simulations of Landau
damping, it is observed that an unphysical phenomenon called
``recurrence'' occurs for most grid-based solvers
\cite{Einkemmer2014Convergence}.

The recurrence is an unphysical periodic behavior in the numerical
solutions of the VP equation. It can be demonstrated by the simple
advection equation ($E = 0$ in \eqref{eq:vlasov}) whose exact solution
is $f(x,\xi,t) = f(x-\xi t, \xi, 0)$. It shows that any spatial wave
in the initial condition will cause a wave in the velocity domain in
the evolution of the solution, and the frequency of the wave gets
higher when $t$ increases. If the velocity domain is discretized by a
fixed grid, the exact solution cannot be well resolved when $t$ is
large. Particularly, the numerical solution may look smoother than the
exact solution and therefore appears similar to the solution at a
previous time. Such phenomenon has been reported in a number of
research works with different grid-based numerical methods
\cite{Filbet_1, Qiu2011positivity,Filbet2003,SL,CIP}. The appearance
of the recurrence may be postponed by using a larger number of
velocity grids \cite{zhou2001numerical,heath2012discontinuous}, which
also introduces larger computational cost. To avoid the recurrence,
the particle-in-cell method \cite{birdsall1991plasma,
hockeny1981computer, dawson1983particle} can be adopted and it is
reported in \cite{camporeale2016velocity} that the numerical result
does not present recurrence. However, since the particle-in-cell
method is a stochastic method, only half-order convergence can be
achieved. An idea of combining the two types of methods is introduced
in \cite{habbasi2011}, where the authors suppress the recurrence by
introducing some randomness into the grid-based methods.

In this paper, we consider another type of methods called transform
methods \cite{cheng, grant1967fourier}, where the distribution
function is mapped to the frequency space and the Fourier modes are
solved instead of the values on the grid points. Especially, we adopt
the Hermite-spectral method introduced by \cite{Holloway1996Spectral}
as the asymmetric Hermite method. The similar idea is adopted in
\cite{Wang} to get a slightly nonlinear discretization. For transform
methods, one can suppress recurrence by introducing filters to the
numerical methods \cite{parker2015fourier,Einkemmer2014strategy}, or
adding artificial collisions to the model
\cite{camporeale2016velocity, pezzi2016collisional}. The suppression
of the recurrence is numerically analyzed in \cite{Hilscher2013},
where it is shown that the collision has a damping effect for the
high-frequency modes, so that the distribution function is smoothed
out and the filamentation is weakened. However, a theoretical study of
its underlying mathematical mechanism is still missing in the
literature.

To fill the vacancy, we are going to conduct a theoretical analysis on
the relation between the filters and the recurrence. The analysis is
performed on two types of transport equations including the advection
equation and Vlasov-Poisson equation. For both types of equations, it
is shown by eigenvalue analysis that all the non-constant modes in the
discrete system converge to zero exponentially as the time goes to
infinity, and therefore the damping effect is rigorously proven.
Moreover, it is formally shown that the filter does not change the
damping rate of the electric energy in the case of linear Laudau
damping. Our numerical results are consistent with the analysis. In
the tests for linear Landau damping, numerical results with high
quality are observed with the filter introduced in \cite{HouLi2007}.

The rest of this paper is organized as follows. In Section
\ref{sec:Hermite}, we briefly introduce the Hermite-spectral method
and the filters. In Section \ref{sec:con} and \ref{sec:Vlasov}, two
types of equations are analyzed respectively. Some numerical
experiments are performed in Section \ref{sec:num}, and the concluding
remarks are given in Section \ref{sec:conclusion}.


\section{Hermite-spectral method and filtering} \label{sec:Hermite}
In this section, we focus on the velocity discretization of the Vlasov
equation. Following \cite{Wang}, we consider the
following approximation of the distribution function\footnote{In
  \cite{Wang}, the basis functions are translated
  and scaled in order to adapt the functions, while in this paper,
  such adaption is removed for easier analysis, and the resulting
  equations can be regarded as a linearized version of the model in
  \cite{Wang}.}:
\begin{equation}
  \label{eq:hermit_expansion}
  f(x, \xi, t) \approx \frac{1}{\sqrt{2\pi}}\sum_{i = 0}^M f_i(x, t)
  \He_i(\xi)\exp\left(-\frac{\xi^2}{2}\right),
\end{equation}
where $\He_i(\xi)$ is the normalized Hermite polynomials defined by
\begin{equation}
\He_n(\xi) = \frac{(-1)^n}{\sqrt{n!}}
  \exp\left( \frac{\xi^2}{2} \right) \frac{\mathrm{d}^n}{\mathrm{d}x^n}
  \exp \left( -\frac{\xi^2}{2} \right),
\end{equation}
and they have the following properties:
\begin{enumerate}
\item Orthogonality:
  \begin{equation}
    \label{eq:hermit_orthogonaliy}
    \frac{1}{\sqrt{2\pi}} \int_{\bbR}\He_m(\xi)\He_n(\xi)
      \exp\left(-\frac{\xi^2}{2}\right) \dd\xi =
    \delta_{mn}, \quad \forall  m, n\in \bbN;
  \end{equation}
\item Recursion relation:
  \begin{equation}
    \label{eq:hermit_recursion}
    \sqrt{n+1}\He_{n+1}(\xi)  = \xi \He_{n}(\xi) - \sqrt{n}
    \He_{n-1}(\xi), \quad \forall n\in \bbN;
  \end{equation}
\item Differential relation:
  \begin{equation}
    \label{eq:hermit_differential}
    \begin{gathered}
      \He_{n+1}'(\xi)  = \sqrt{n+1} \He_{n}(\xi), \\
      \frac{\mathrm{d}}{\mathrm{d}\xi}
        \left( \He_{n}(\xi)\exp\left(-\frac{\xi^2}{2}\right)\right)
      = -\sqrt{n+1} \He_{n+1}(\xi)\exp\left(-\frac{\xi^2}{2}\right),
      \quad \forall n \in \bbN.
    \end{gathered}
  \end{equation}
\end{enumerate}
Using these properties, the equations for the coefficients $f_i(x,t)$
can be derived by inserting \eqref{eq:hermit_expansion} into
\eqref{eq:vlasov} and integrating the result against $\He_k(\xi)$ with
$k = 0,\cdots,M$. By defining $\bbf = (f_0, f_1, \cdots, f_M)^T$, we
have the following evolution equations:
\begin{equation}
  \label{eq:moment_system}
  \pd{\bbf}{t} + {\bA} \pd{\bbf}{x}  - E\bB\bbf = 0, 
\end{equation}
where $\bA$ and $\bB$ are $(M+1)\times (M+1)$ matrices defined by
\begin{equation}
  \label{eq:matrixAB}
  \bA =
  \begin{pmatrix}
    0 &  1  & 0 & 0& \dots & 0\\
    1 & 0 & \sqrt{2} & 0& \dots & 0\\
    0 & \sqrt{2} & 0 & \sqrt{3}& \dots & 0\\
    0 & \ddots & \ddots & \ddots &\ddots & \vdots\\
    \vdots & \ddots & 0 & \sqrt{M-1} & 0 & \sqrt{M}\\
    0  & \dots & 0 & 0 & \sqrt{M} & 0
  \end{pmatrix}, \quad
  {\bB} =
  \begin{pmatrix}
    0 & 0 & 0 & \dots & 0\\
    1 & 0 & 0 & \dots & 0\\
    0 & \sqrt{2} & 0 & \dots & 0\\
    \vdots & \ddots & \ddots & \ddots & \vdots\\
    0  & \dots & 0 & \sqrt{M} & 0
  \end{pmatrix}.
\end{equation}

The system \eqref{eq:moment_system} is the semi-discrete transport
equation after spectral discretization of the velocity variable. As
will be seen below, such discretization suffers from a deficiency
called ``recurrence phenomenon'' \cite{Wang, parker2015fourier}, which
causes the non-physical echo of the electric energy when simulating the
plasma. In \cite{parker2015fourier}, the authors proposed the filtered
spectral method, and in the numerical results, the recurrence was
clearly suppressed. Here we adopt the similar method and apply the
filter after every time step. In detail, let $\bbf^n = (f_0^n, \cdots,
f_M^n)^T$ be the numerical solution of \eqref{eq:moment_system} at the
$n$th time step, and suppose a numerical scheme for
\eqref{eq:moment_system} is
\begin{equation}
  \label{eq:step_1}
  \bbf^{n+1} = \bQ(\bbf^{n}).
\end{equation}
When a filter is applied, the above scheme is altered as
\begin{equation}
  \label{eq:filter}
  \begin{aligned}
    & \bbf^{n, \ast} = \bQ(\bbf^n),  \\
    & f_i^{n+1} = \sigma_M(i) f_i^{n, \ast}, \qquad
      i = 0,\cdots,M,
  \end{aligned}
\end{equation}
where the filter $\sigma_M(i)$ satisfies
\begin{equation}
  \sigma_M(0) = 1, \qquad
  \lim_{M \rightarrow \infty} \sigma_M(i) = 1,
  \qquad \forall i \in \bbN.
\end{equation}
The filter is often interpreted as an operator with the effect of
diffusion \cite{Gottlieb2001spectral}. Especially, when we take the
exponential filter
\begin{equation} \label{eq:sigma_M}
\sigma_M(i) = \exp \left( -\alpha (i/M)^p \right),
\end{equation}
the method \eqref{eq:filter} is actually computing the solution to the
modified problem
\begin{equation} \label{eq:filtered_vlasov}
  \pd{f}{t} + \xi \pd{f}{x} + E \pd{f}{\xi} =
    -\alpha \frac{(-1)^p}{\Delta t \, M^{p}} \mathcal{D}^p f,
\end{equation}
where $\mathcal{D}$ is a linear operator defined by
\begin{equation}
  \mathcal{D}f(x,\xi,t) = \frac{\partial}{\partial \xi} \left[
    \exp \left( -\frac{\xi^2}{2} \right) \frac{\partial}{\partial \xi}
      \left( \exp \left( \frac{\xi^2}{2} \right) f(x,\xi,t) \right)
  \right].
\end{equation}
The time step $\Delta t$ needs to be chosen to ensure stability. Since
$\|\bB\|_2 = \sqrt{M}$ and the maximum eigenvalue of $\bA$ is the
maximum zero of $\He_{M+1}(\xi)$, which grows asymptotically as
$O(\sqrt{M})$, we choose the time step as $\Delta t \sim O(M^{-1/2})$
in this work. Thus one sees that when $M$ gets larger, the equation
formally converges to the transport equation \eqref{eq:vlasov} if $p
> 1/2$.

When the original transport equation \eqref{eq:vlasov} is replaced by
\eqref{eq:filtered_vlasov}, the semi-discrete system
\eqref{eq:moment_system} changes to
\begin{equation}
  \label{eq:moment_system_filter}
  \pd{\bbf}{t} + {\bA} \pd{\bbf}{x}  - E\bB\bbf = {\bH} \bbf,
\end{equation} 
where $\bH = -\Delta t^{-1} \alpha \, \mathrm{diag} \left\{ 0,
(1/M)^{p}, \cdots, [(M-1)/M]^{p}, 1 \right\}$. In general, we assume
that
\begin{equation} \label{eq:h}
  \bH = \mathrm{diag} \{ h_0, h_1, \cdots, h_M \}
\end{equation}
is an $(M+1)$ by $(M+1)$ diagonal matrix with non-positive diagonal
entries, and in order to keep the conservation of total number of
particles, we require that the first entry $h_0 = 0$. 

Below we are going to remove the spatial derivative by Fourier series
expansion. The periodic boundary condition \eqref{eq:periodic_bc}
shows that $\bbf$ is also periodic with respect to $x$. Thus we have
the following series expansions:
\begin{equation}
  \label{eq:fourier_expansion}
  \bbf = \sum_{m \in \bbZ} \hat{\bbf}^{(m)}\exp(\imag m k x), \quad
    E = \sum_{m \in \bbZ}\hat{E}^{(m)}\exp(\imag m k x),
    \qquad k = 2\pi / D,
\end{equation}
and Parseval's equality shows that
\begin{equation}
  \label{eq:fourier_equal}
  \|\bbf\|_2^2 = D\sum_{m \in \bbZ} \left\|\hat{\bbf}^{(m)}\right\|_2^2, 
  \qquad \|E\|_2^2  = D\sum_{m \in \bbZ} |\hat{E}^{(m)}|^2.
\end{equation}
Substituting \eqref{eq:fourier_expansion} into
\eqref{eq:moment_system_filter}, we get the equations for the Fourier
coefficients $\hat{\bbf}^{(m)}$:
\begin{equation}
  \label{eq:fourier_system_vlasov}
  \pd{\hat{\bbf}^{(m)}}{t} + \imag m k {\bA}\hat{\bbf}^{(m)} -
  \sum\limits_{l\in\bbZ}\hat{E}^{(l)}{\bB}\hat{\bbf}^{(m-l)} =
  {\bH} \hat{\bbf}^{(m)}, \quad m\in\bbZ.
\end{equation}
Based on the form \eqref{eq:fourier_system_vlasov}, we will show in
the following sections that the filter $\bH$ can suppress the
recurrence phenomenon, especially in the simulation of Landau damping.

\section{Advection Equation} \label{sec:con}
\subsection{Recurrence without filter}
\label{sec:nofilter}
We will begin our discussion with a simple case $E = 0$, and thus the
transport equation \eqref{eq:vlasov} becomes
\begin{equation}
  \label{eq:transport}
  \pd{f}{t} + \xi \pd{f}{x} = 0,
    \qquad t \in \bbR^+, \quad x \in \bbD, \quad \xi \in \bbR,
\end{equation}
where $\bbD = [0, D]$ and the periodic boundary condition is imposed.
It is known that the recurrence can be observed in such a simple
advection equation with initial value
\begin{equation} \label{eq:init}
  f(x,\xi,0) = \frac{1}{\sqrt{2\pi}} (1 + \epsilon \cos(kx))
    \exp \left( -\frac{\xi^2}{2} \right), \qquad k = \frac{2\pi}{D},
\end{equation}
and the recurrence time can be exactly given if the velocity is
discretized with a uniform grid \cite{Pohn2005Eulerian,
  habbasi2011}. For Hermite-spectral method, the system
\eqref{eq:fourier_system_vlasov} is correspondingly reduced to
\begin{equation}
  \label{eq:fourier_system_te}
  \pd{\hat{\bbf}^{(m)}}{t} + \imag m k \bA \hat{\bbf}^{(m)} =
    {\bH}  \hat{\bbf}^{(m)}, \quad m\in\bbZ.
\end{equation}
If no filter is applied, $\bH = 0$, and then the solution to the above
system is
\begin{equation}
  \label{eq:sol_no_filter}
  \hat{\bbf}^{(m)}(t)
    = \exp\left(-\imag m k t {\bA} \right)\hat{\bbf}^{(m)}(0)
    = {\bR^T}\exp\left(-\imag m k t {\bLambda} \right)
      {\bR}\hat{\bbf}^{(m)}(0),
\end{equation}
where $\bR$ is an $(M+1) \times (M+1)$ orthogonal matrix satisfying
${\bR} {\bLambda} {\bR}^T = {\bA}$, and $\bLambda$ is an $(M+1)\times
(M+1)$ real diagonal matrix due to the symmetry of the real matrix
$\bA$. The equality \eqref{eq:sol_no_filter} indicates that 
\begin{equation} \label{eq:l2_norm}
\norm{\hat{\bbf}^{(m)}(t)}^2 = \norm{\hat{\bbf}^{(m)}(0)}^2,
  \qquad \forall m \in \mathbb{Z}, \quad t \in \bbR^+.
\end{equation}

To observe the recurrence, we suppose that the exact solution to 
\eqref{eq:transport} and \eqref{eq:init} can be written as
\begin{equation}
f(x,\xi,t) = \frac{1}{\sqrt{2\pi}} \sum_{i=0}^M \sum_{m \in \bbZ}
  \hat{f}_{\mathrm{ex},i}^{(m)}(t) \exp(\imag mkx)
  \He_i(\xi) \exp \left( -\frac{\xi^2}{2} \right).
\end{equation}
By straightforward calculation, we have
\begin{equation} \label{eq:exact}
\hat{f}_{\mathrm{ex},i}^{(m)}(t) = \left\{ \begin{array}{ll}
  \delta_{i0}, & \text{if } m = 0, \\
  \dfrac{\epsilon}{2} \dfrac{(\imag mkt)^i}{\sqrt{i!}}
    \exp \left( -\dfrac{k^2 t^2}{2} \right), & \text{if } m = \pm 1, \\
  0, & \text{otherwise}.
\end{array} \right.
\end{equation}
Therefore
\begin{equation} \label{eq:quad_decay}
\sum_{i=0}^M |\hat{f}_{\mathrm{ex},i}^{(\pm 1)}(t)|^2 =
  \frac{\epsilon^2}{4} \sum_{i=0}^M \frac{(kt)^{2i}}{i!} \exp(-k^2 t^2),
\end{equation}
which decays to zero as $t \rightarrow \infty$. The relation
\eqref{eq:l2_norm} shows that this property is not maintained after
discretization. In fact, if $(2\pi)^{-1} k t \bLambda$ is close to an
integer matrix for some $t$, then $\hat{\bbf}^{(m)}(t)$ is close to
$\hat{\bbf}^{(m)}(0)$ for all $m \in \bbZ$, which turns out to be a
``recurrence''. Some illustration will be given in Section
\ref{sec:num}.

\subsection{Suppression of recurrence with filter} \label{sec:supp}
When a filter is applied, the solution to \eqref{eq:fourier_system_te}
is 
\begin{equation}
  \label{eq:fourier_system_sol2}
  \hat{\bbf}^{(m)}(t) = \exp\Big(\left(-\imag m k {\bA} + \bH\right) t
  \Big)\hat{\bbf}^{(m)}(0).
\end{equation}
In this subsection, we assume that $\bH$ is a filter whose first
diagonal entry is zero and last diagonal entry is nonzero. Actually,
almost all filters have such a form so that the high-frequency modes
can be damped while the low-frequency modes are not disturbed. Based
on this assumption, we have the following theorem:
\begin{theorem}
  \label{thm:transport_1}
  Let $\bA_m = -\imag m k {\bA} + {\bH}$. Then for all $m \in \bbZ
  \backslash \{0\}$, all the eigenvalues of $\bA_m$ have negative real
  parts.
\end{theorem}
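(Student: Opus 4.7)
The plan is to use a quadratic-form argument combined with a propagation-of-zeros argument exploiting the tridiagonal structure of $\bA$. Let $(\lambda, \bv)$ be any eigenpair of $\bA_m$ with $\bv \neq 0$. Since $\bA$ is real symmetric and $\bH$ is real diagonal, the quantities $\bv^{\ast} \bA \bv$ and $\bv^{\ast} \bH \bv$ are real. Taking the inner product of $\bA_m \bv = \lambda \bv$ with $\bv$ and dividing by $\|\bv\|_2^2$ therefore yields
\begin{equation*}
  \re(\lambda) = \frac{\bv^{\ast} \bH \bv}{\|\bv\|_2^2}, \qquad
  \Imag(\lambda) = -\frac{m k \, \bv^{\ast} \bA \bv}{\|\bv\|_2^2}.
\end{equation*}
Because every diagonal entry $h_i \le 0$, the first identity immediately gives $\re(\lambda) \le 0$. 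This handles the weak inequality; the entire content of the theorem is in the strict one.

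Next I would argue by contradiction: suppose $\re(\lambda) = 0$. Then $\bv^{\ast} \bH \bv = \sum_i h_i |v_i|^2 = 0$, and since every term is non-positive, $h_i < 0$ forces $v_i = 0$. By the theorem's standing assumption $h_M < 0$, so at least $v_M = 0$. The task now is to propagate this vanishing down through all indices using the eigen-equation $(-\imag m k \bA + \bH)\bv = \lambda \bv$ componentwise.

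The propagation uses the fact that the sub- and super-diagonal entries of $\bA$ are $\sqrt{j}$ and $\sqrt{j+1}$, all strictly positive. Reading the $M$-th row of the eigen-equation gives $-\imag m k \sqrt{M}\, v_{M-1} + h_M v_M = \lambda v_M$; since $v_M = 0$ and $mk \ne 0$, we conclude $v_{M-1} = 0$. Proceeding inductively, if $v_{j+1} = v_{j+2} = 0$ have been established, the $(j+1)$-st row reads $-\imag m k (\sqrt{j+1}\, v_j + \sqrt{j+2}\, v_{j+2}) + h_{j+1} v_{j+1} = \lambda v_{j+1}$, which forces $v_j = 0$. Iterating down to $j = 0$ yields $\bv = 0$, contradicting $\bv$ being an eigenvector. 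Hence $\re(\lambda) < 0$ strictly.

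I do not expect a serious obstacle here: the quadratic-form step is almost automatic from the symmetry of $\bA$ and non-positivity of $\bH$, and the downward propagation is a clean consequence of the tridiagonal form together with $h_M < 0$ and $m \neq 0$. The only subtlety worth highlighting in the write-up is that the argument exploits \emph{two} structural features simultaneously — the positivity of the off-diagonal entries of $\bA$ (ensuring no row of the eigen-equation degenerates) and the non-vanishing of at least the \emph{last} filter entry (ensuring the induction has a starting index). If one only assumed $\bH \neq 0$ somewhere other than the last position, the same scheme would still work but would need to start from that index and propagate in both directions.
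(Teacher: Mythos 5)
Your proof is correct, and its first half (the quadratic-form identity giving $\re\lambda = \bv^{\ast}\bH\bv/\|\bv\|_2^2 \leqslant 0$, then assuming $\re\lambda=0$ and deducing $v_M=0$ from $h_M<0$) is exactly the paper's argument. Where you diverge is in how the contradiction is extracted from $v_M=0$: the paper uses $\bH\br=0$ to rewrite the eigen-equation as $\bA\br = -(\lambda_I/mk)\br$, so that $\br$ becomes an eigenvector of the symmetric tridiagonal matrix $\bA$ itself, and then invokes Lemma~\ref{lem:root_hermite} (eigenvectors of a symmetric tridiagonal matrix with nonzero subdiagonal have nonzero last component). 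You instead run a direct componentwise propagation of zeros down the rows of $\bA_m$: row $M$ gives $v_{M-1}=0$, and with two consecutive zeros in hand each successive row kills the next component. Your route is more self-contained --- it never needs to pass to $\bA$ or cite the lemma, and it only uses $v_M=0$ rather than the full statement $\bH\bv=0$ --- while the paper's route reuses Lemma~\ref{lem:root_hermite}, which it needs anyway for Theorem~\ref{thm:jordan_normal}. Both are sound; yours is essentially an inline proof of the relevant part of that lemma.

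One caveat on your closing aside: if the only nonzero filter entry were at an \emph{interior} index $j$, the scheme would not go through as stated. From $v_j=0$ alone, row $j$ of the eigen-equation yields only the relation $\sqrt{j}\,v_{j-1}+\sqrt{j+1}\,v_{j+1}=0$, not the vanishing of either neighbor; the marching argument needs two consecutive zero components to start, which is why the hypothesis that the \emph{last} (or first) entry of $\bH$ is nonzero is genuinely used. This does not affect the theorem as stated, but the claimed bidirectional generalization is not automatic.
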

The above theorem shows that for all $m \in \bbZ \backslash \{0\}$,
$\hat{\bbf}^{(m)}(t) \rightarrow 0$ as $t \rightarrow +\infty$, which
fixes the undesired property \eqref{eq:l2_norm}, and agrees with the
decaying behavior of the exact solution to the advection equation.
Thus the recurrence is suppressed. The proof of the theorem requires
the following lemma:

\begin{lemma}
  \label{lem:root_hermite}
  Let $\bM = (a_{ij})_{N\times N}$ be a symmetric tridiagonal matrix
  with nonzero subdiagonal entries. Define $p_n(\lambda)$, $n =
  1,\cdots,N$ as the characteristic polynomial of the $n$th order
  leading principle submatrix of $\bM$. Then the following statements
  hold:
  \begin{itemize}
    \item The roots of $p_n$ and $p_{n+1}$ are interlacing;
    \item If $\lambda$ is an eigenvalue of $\bM$, then the associated
      eigenvector is $\br = (r_1, \cdots, r_N)^T$ with $r_1 = 1$ and
      $r_n = (-1)^{n-1} p_{n-1}(\lambda) / (a_{12} a_{23} \cdots
      a_{n-1,n})$ for $n > 1$;
    \item If $\br = (r_1, \cdots, r_N)^T$ is an eigenvector of $\bM$,
      then $r_N \neq 0$.
  \end{itemize}
\end{lemma}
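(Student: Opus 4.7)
The plan is to dispatch the three statements in the order (2), (3), (1), using throughout the three-term recursion
\begin{equation*}
  p_n(\lambda) = (a_{nn} - \lambda) p_{n-1}(\lambda) - a_{n-1,n}^2\, p_{n-2}(\lambda),
  \qquad p_0 \equiv 1, \quad p_{-1} \equiv 0,
\end{equation*}
which follows by Laplace-expanding $\det(\bM_n - \lambda \bI)$ along the last row and invoking the symmetry $a_{n,n-1} = a_{n-1,n}$. With this tool at hand, parts (2) and (3) reduce to algebraic manipulations, while part (1) is the classical Sturm-sequence interlacing argument.

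For (2), I would proceed by induction on $n$. Row $n$ of the eigenvalue equation reads $a_{n,n+1} r_{n+1} = (\lambda - a_{nn}) r_n - a_{n-1,n} r_{n-1}$. Substituting the candidate $r_n = (-1)^{n-1} p_{n-1}(\lambda)/(a_{12}\cdots a_{n-1,n})$ and putting the right-hand side over the common denominator $a_{12}\cdots a_{n-1,n}$ (noting $a_{12}\cdots a_{n-1,n} = a_{n-1,n}\cdot(a_{12}\cdots a_{n-2,n-1})$), the numerator collapses to $(a_{nn}-\lambda) p_{n-1}(\lambda) - a_{n-1,n}^2\, p_{n-2}(\lambda)$, which is $p_n(\lambda)$ by the recursion. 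The overall sign simplifies to $(-1)^n$, yielding the prescribed formula for $r_{n+1}$. When $\lambda$ is an actual eigenvalue, $p_N(\lambda) = 0$ makes the last, otherwise redundant, row of $(\bM - \lambda\bI)\br = 0$ automatic.

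For (3), I would argue by contradiction: suppose $r_N = 0$. The last equation of $(\bM - \lambda\bI)\br = 0$ reads $a_{N,N-1} r_{N-1} = 0$, and the nonzero subdiagonal hypothesis forces $r_{N-1} = 0$. Row $N-1$ then collapses to $a_{N-1,N-2} r_{N-2} = 0$, so $r_{N-2} = 0$, and backward induction yields $\br = 0$, contradicting that $\br$ is an eigenvector. As a by-product, the formula of (2) parametrises the full eigenspace, so every eigenvalue of $\bM$ is simple.

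For (1), the cleanest route is a Sturm-sequence argument. First I would note that $p_n$ and $p_{n-1}$ share no common zero, since the recursion would then propagate this zero downward all the way to $p_0 \equiv 1$. Evaluating the recursion at any root $\mu$ of $p_n$ gives $p_{n+1}(\mu) = -a_{n,n+1}^2\, p_{n-1}(\mu)$; assuming inductively that the roots of $p_{n-1}$ strictly separate those of $p_n$, the polynomial $p_{n-1}$ alternates sign across consecutive roots of $p_n$, and hence so does $p_{n+1}$, producing $n-1$ interior roots of $p_{n+1}$ strictly between consecutive roots of $p_n$. Comparing the sign of $p_{n+1}$ at the extreme roots of $p_n$ with its leading behaviour $(-\lambda)^{n+1}$ as $\lambda \to \pm\infty$ supplies the remaining two roots outside this span. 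The step I expect to be most error-prone is the sign bookkeeping in this last stage; as a fallback, Cauchy's interlacing theorem applies directly, since $\bM_n$ is a Hermitian principal submatrix of $\bM_{n+1}$.
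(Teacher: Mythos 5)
Your proof is correct, and it is more self-contained than what the paper offers: the paper does not actually prove this lemma, but instead cites a reference for the interlacing property, cites Wilkinson for the eigenvector formula (remarking that it ``can also be checked directly''), and obtains the third statement as a corollary of the first two --- since $r_N$ is proportional to $p_{N-1}(\lambda)$ and the roots of $p_{N-1}$ and $p_N$ strictly interlace, $p_N(\lambda)=0$ forces $p_{N-1}(\lambda)\neq 0$. Your verification of the eigenvector formula by forward substitution through the three-term recursion is exactly the ``direct check'' the paper alludes to, and your Sturm-sequence argument is the standard content of the cited reference. Where you genuinely diverge is in the third statement: your backward-substitution argument ($r_N=0$ forces $r_{N-1}=0$, and so on down to $\br=0$, contradicting that $\br$ is an eigenvector) is more elementary than the paper's route, since it uses only the nonvanishing of the subdiagonal and needs neither interlacing nor the explicit eigenvector formula; as you note, it also yields the one-dimensionality of each eigenspace, which the paper relies on later in the proof of Theorem \ref{thm:jordan_normal}. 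One caution about your fallback for the interlacing claim: Cauchy's theorem by itself gives only weak interlacing, so it must be combined with your observation that consecutive $p_n$ share no common zero to obtain the strict separation that the distinctness of eigenvalues (and the paper's own derivation of statement (3)) requires; since your primary Sturm argument delivers strictness directly and you do establish the no-common-zero fact, the proof as written is complete.
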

In the above lemma, the first statement is a well-known result for the
interlacing system. We refer the readers to
\cite{hamming1989introduction} for the proof. The second result can be
found in \cite{Wilkinson1965} and it can also be checked directly
using the definition of eigenvectors. The third statement is obviously
a result of the first two statements.

Now we give the proof of Theorem \ref{thm:transport_1} as below:
{\renewcommand\proofname{Proof of Theorem \ref{thm:transport_1}}
\begin{proof}
  Let $\lambda$ be an eigenvalue of $\bA_m$ for some $m \in \bbZ
  \backslash \{0\}$. We first show that $\re \lambda \leqslant 0$.
  Suppose $\br$ is the associated eigenvector. Using ${\bA_m}\br =
  \lambda \br$, we have that
  \begin{equation}
    0 \geqslant 2 \br^{\ast}\bH \br =
    \br^{\ast} \left(\bA_m^{\ast}  + \bA_m\right) \br
    = (\lambda + \bar{\lambda}) \br^{\ast}\br
    = 2 \|\br\|^2 \re \lambda.
  \end{equation}
  Thus $\re \lambda \leqslant 0$.

  It remains only to show that $\re \lambda \neq 0$. If there exist
  $\lambda_I \in \bbR$ and $\br \in \bbC^{M+1}$, such that ${\bA_m}\br
  = \imag \lambda_I \br$, then
  \begin{equation}
    \imag \lambda_I \|\br\|^2 = \br^{\ast} \bA_m \br
      = -\imag mk \br^* \bA \br + \br^* \bH \br.
  \end{equation}
  The symmetry of both $\bA$ and $\bH$ yields $\br^* \bH \br = 0$,
  which is equivalent to $\bH \br = 0$ since $\bH$ is diagonal.
  Recalling that the last diagonal entry of $\bH$ is assumed to be
  strictly negative, we know that the last component of $\br$ is zero.
  Furthermore, we have
  \begin{equation}
    \bA \br = \frac{\imag}{mk} (\bA_m - \bH) \br
      = -\frac{\lambda_I}{mk} \br,
  \end{equation}
  which indicates that $\br$ is an eigenvector of $\bA$. According
  to Lemma \ref{lem:root_hermite}, the last component of $\br$ must be
  nonzero, which is a contradiction. Therefore $\bA_m$ does not have
  purely imaginary eigenvalues, which concludes the proof.
\end{proof}}

The following theorem shows that the convergence rate has a lower
bound for all non-constant Fourier modes:
\begin{theorem}
  \label{thm:jordan_normal}
  For all $m \in \bbZ \backslash \{0\}$, suppose
  \begin{equation}
    \label{eq:jordan_norm}
    \bA_m := -\imag m k \bA +  \bH = \bR_m \bJ_m \bR_m^{-1},
  \end{equation}
  where $\bJ_m$ is the Jordan normal form of $\bA_m$, and every column
  of $\bR_m$ is a unit vector. Then there exists a constant $C^{(0)} >
  0$, such that 
  \begin{equation}
    \label{eq:bound_R}
    \norm{\bR_m} \leqslant C^{(0)}, \quad \norm{\bR_m^{-1}} \leqslant C^{(0)}.
  \end{equation}
  And there exists a constant $\lambda^{(0)} > 0$, such that for any
  $m \in \bbZ \backslash \{0\}$, all the eigenvalues of $\bA_m$ have
  real parts less than $-\lambda^{(0)}$.
\end{theorem}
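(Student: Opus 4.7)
My plan is to separate $\bbZ\setminus\{0\}$ into a bounded piece $\{0<|m|\le M_0\}$ and a tail $\{|m|>M_0\}$. The bounded piece contains only finitely many matrices $\bA_m$ of fixed size $(M+1)\times(M+1)$. Theorem \ref{thm:transport_1} already shows that each of the finitely many eigenvalues of each such $\bA_m$ has strictly negative real part, so a finite minimum supplies a uniform spectral gap on this piece; and for each such $m$ one may fix any Jordan decomposition with unit columns, so a finite maximum bounds $\|\bR_m\|_2$ and $\|\bR_m^{-1}\|_2$ on this piece as well. All the substantive work is therefore in the tail.

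For the tail I would diagonalize the leading term. Writing $\bA=\bR\bLambda\bR^T$ with $\bLambda=\mathrm{diag}(\mu_0,\ldots,\mu_M)$ and $\bR$ orthogonal, Lemma \ref{lem:root_hermite} tells us that the $\mu_i$ (zeros of $\He_{M+1}$) are simple and that each column $\bv_i$ of $\bR$ has nonzero last entry. Changing basis,
\begin{equation*}
\bR^T\bA_m\bR \;=\; -\imag mk\,\bLambda + \tilde\bH, \qquad \tilde\bH:=\bR^T\bH\bR,
\end{equation*}
whose $i$th diagonal entry is $-\imag mk\mu_i+\tilde h_{ii}$ with
\begin{equation*}
\tilde h_{ii}=\bv_i^T\bH\bv_i=\sum_{j=0}^M h_j\,[(\bv_i)_j]^2\;\le\; h_M\,[(\bv_i)_M]^2\;<\;0,
\end{equation*}
since every $h_j\le 0$, $h_M<0$, and $(\bv_i)_M\ne 0$. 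Because the $\mu_i$ are distinct, the rescaled family $-\imag k\bA+\tfrac1m\bH$ has simple spectrum at parameter $0$, so analytic perturbation theory in the parameter $1/m$ yields
\begin{equation*}
\lambda_i(\bA_m)\;=\;-\imag mk\mu_i+\tilde h_{ii}+O(|m|^{-1})
\end{equation*}
as $|m|\to\infty$, after a suitable labeling of the eigenvalues. Hence $\re\lambda_i(\bA_m)\le\tfrac12\tilde h_{ii}<0$ once $|m|\ge M_0$, which is the uniform spectral gap on the tail.

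The same expansion shows that for $|m|\ge M_0$ the eigenvalues of $\bA_m$ are distinct, so $\bJ_m$ is diagonal and the unit-normalized eigenvectors of $\bA_m$ converge (up to sign) to the columns of $\bR$; consequently $\|\bR_m\|_2,\|\bR_m^{-1}\|_2\to 1$, giving a uniform bound on the tail. Combining the two pieces yields the constants $C^{(0)}$ and $\lambda^{(0)}$. The step I expect to be the main obstacle is the intermediate regime, where eigenvalues of $\bA_m$ can coalesce and produce a genuinely defective $\bJ_m$ with an ill-conditioned $\bR_m$; however, since $m$ ranges over the integers and such coalescences in the continuous parameter occur at isolated values, only finitely many troublesome $m$ lie in the bounded range, and they are harmlessly absorbed by the finite-maximum argument.
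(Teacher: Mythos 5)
Your proposal is correct and follows essentially the same route as the paper: a finite-maximum argument for small $|m|$ (which the paper leaves implicit, invoking Theorem \ref{thm:transport_1}), and for large $|m|$ the observation that $\tfrac{\imag}{mk}\bA_m = \bA + \tfrac{\imag}{mk}\bH \to \bA$, whose spectrum is simple by Lemma \ref{lem:root_hermite}, so that the normalized eigenvectors converge to the orthogonal eigenbasis of $\bA$ and the real parts of the eigenvalues converge to $\br_i^{\ast}\bH\br_i < 0$, the negativity again coming from the nonzero last component of the eigenvectors of $\bA$. Your first-order perturbation expansion in $1/m$ is just a more quantitative phrasing of the paper's limit computation, and your explicit bound $\tilde h_{ii}\leqslant h_M[(\bv_i)_M]^2<0$ makes transparent the inequality the paper attributes to the proof of Theorem \ref{thm:transport_1}.
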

\begin{proof}
  Consider the matrix
  \begin{equation} \label{eq:B_m}
    \bB_m := \frac{\imag}{mk} \bA_m = \bA + \frac{\imag}{mk} \bH,
    \qquad m \in \bbZ \backslash \{0\}.
  \end{equation}
  Apparently the characteristic polynomial of $\bB_m$ converges to the
  characteristic polynomial of $\bA$ as $m \rightarrow \infty$.
  Thus all the eigenvalues of $\bB_m$ also converge to the
  eigenvalues of $\bA$ as $m \rightarrow \infty$. Lemma
  \ref{lem:root_hermite} implies that all the eigenvalues of $\bA$ are
  distinct. Therefore there exists an $m_0 > 0$ such that all the
  eigenvalues of $\bB_m$ are distinct if $|m| > m_0$. The relation
  between $\bA_m$ and $\bB_m$ indicates that all the eigenvalues of
  $\bA_m$ are also distinct when $|m| > m_0$. In this case, the
  similarity transformation \eqref{eq:jordan_norm} becomes a
  diagonalization of $\bA_m$, and every column of $\bR_m$ is a unit
  eigenvector of $\bA_m$ (or $\bB_m$).

  To show the bound \eqref{eq:bound_R}, it is sufficient to show that
  $\|\bR_m\|_2$ and $\|\bR_m^{-1}\|_2$ have a uniform upper bound for
  all $|m| > m_0$. Let $\br_i^{(m)}$ be the $i$th column of $\bR_m$,
  and suppose $\bB_m \br_i^{(m)} = \mu_i^{(m)} \br_i^{(m)}$. Since
  exchanging two columns of $\bR_m$ does not change the norms
  $\|\bR_m\|_2$ and $\|\bR_m^{-1}\|_2$, we can assume
  \begin{equation}
    \lim_{m \rightarrow \infty} \mu_i^{(m)} = \lambda_i,
  \end{equation}
  where $\lambda_i$ is the $i$th eigenvalue of $\bA$. Using the fact
  that $\br_i^{(m)}$ is a unit vector, we have that
  \begin{equation*}
    0 = \lim_{m\rightarrow \infty}
        \left( \bB_m - \mu_i^{(m)} \bI \right) \br_i^{(m)}
      = \lim_{m\rightarrow \infty} (\bA  - \lambda_i \bI) \br_i^{(m)}.
  \end{equation*}
  Hence,
  \begin{equation}
    \label{eq:lim_eigenvector}
    \lim_{m\rightarrow \infty} \br_i^{(m)} = \br_i,
  \end{equation}
  where $\br_i$ is the unit eigenvector of $\bA$ associated with the
  eigenvalue $\lambda_i$. Thus $\bR_m$ has a limit $\bR$ as $m
  \rightarrow \infty$, and the limit diagonalizes $\bA$ as $\bA = \bR
  \bLambda \bR^{-1}$, which naturally leads to the bound
  \eqref{eq:bound_R}.

  To show that the bound $\lambda^{(0)}$ exists, we use the unity of
  vectors $\br_i^{(m)}$ to get
  \begin{equation}
      \lim_{m\rightarrow \infty} \re \lambda_i^{(m)} 
      =\lim_{m\rightarrow \infty} \re
        \left( \left( \br_i^{(m)} \right)^{\ast}\bA_m\br_i^{(m)} \right)
      = \lim_{m\rightarrow \infty} 
        \left( \br_i^{(m)} \right)^{\ast} \bH \br_i^{(m)}
      = \br_i^{\ast} \bH \br_i < 0.
  \end{equation}
  The last inequality comes from the proof of Theorem
  \ref{thm:transport_1}. The existence of the negative limit shows the
  existence of the negative upper bound.
\end{proof}

The above theorem gives an upper bound for the real parts of the
eigenvalues. A direct corollary is
\begin{corollary}
  \label{lem:bound_expm}
  $\forall m \in \bbZ$ and $t > 0$, $\left\|\exp(t \bA_m)\right\|_2$
  is uniformly bounded, and if $m\neq 0$, it holds that
  \begin{equation}
    \label{eq:expAm1}
    \|\exp\left(t \bA_m\right)\|_2 \leqslant
      C^{(1)} \left(t^M + 1\right)
      \exp\left(-\lambda^{(0)}t\right),
  \end{equation}
  where $C^{(1)} = (M+1) \left( C^{(0)} \right)^2$, and the constants
  $\lambda^{(0)}$ and $C^{(0)}$ are introduced in Theorem
  \ref{thm:jordan_normal}.
\end{corollary}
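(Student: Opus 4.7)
The plan is to split the argument at $m=0$ and then use the Jordan decomposition provided by Theorem \ref{thm:jordan_normal} for $m\neq 0$.

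First I would handle $m=0$ separately. Since $\bA_0=\bH$ is diagonal with entries $h_i \leqslant 0$, we get $\exp(t\bA_0)=\mathrm{diag}\{e^{h_0 t},\dots,e^{h_M t}\}$, so $\|\exp(t\bA_0)\|_2\leqslant 1$ uniformly in $t\geqslant 0$. This piece will also serve as part of the uniform-boundedness claim.

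For $m\in\bbZ\setminus\{0\}$, I would invoke the decomposition \eqref{eq:jordan_norm} to write
\begin{equation*}
\exp(t\bA_m)=\bR_m\exp(t\bJ_m)\bR_m^{-1},
\end{equation*}
and use submultiplicativity together with \eqref{eq:bound_R} to obtain $\|\exp(t\bA_m)\|_2\leqslant (C^{(0)})^2\|\exp(t\bJ_m)\|_2$. The core step is then to bound $\|\exp(t\bJ_m)\|_2$. Since $\bJ_m$ is block-diagonal with Jordan blocks $\bJ_m^{(j)}$ of sizes $s_j\leqslant M+1$, one has $\|\exp(t\bJ_m)\|_2=\max_j\|\exp(t\bJ_m^{(j)})\|_2$. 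Writing a single block as $\lambda_j\bI+\bN_j$ with $\bN_j$ a nilpotent shift satisfying $\|\bN_j^k\|_2\leqslant 1$, the commutativity of $\lambda_j\bI$ with $\bN_j$ gives
\begin{equation*}
\|\exp(t\bJ_m^{(j)})\|_2
\leqslant e^{(\re\lambda_j)t}\sum_{k=0}^{s_j-1}\frac{t^k}{k!}
\leqslant e^{-\lambda^{(0)}t}\sum_{k=0}^{M}\frac{t^k}{k!},
\end{equation*}
where the second inequality uses Theorem \ref{thm:jordan_normal} to bound every $\re\lambda_j$ by $-\lambda^{(0)}$. Crudely estimating $\sum_{k=0}^M t^k/k!\leqslant (M+1)(t^M+1)$ for all $t\geqslant 0$ then yields
\begin{equation*}
\|\exp(t\bA_m)\|_2\leqslant (C^{(0)})^2(M+1)(t^M+1)e^{-\lambda^{(0)}t}
=C^{(1)}(t^M+1)e^{-\lambda^{(0)}t},
\end{equation*}
which is \eqref{eq:expAm1}. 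Uniform boundedness across all $m$ and $t>0$ then follows because $(t^M+1)e^{-\lambda^{(0)}t}$ attains a finite maximum on $[0,\infty)$, and the $m=0$ case is already bounded by $1$.

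The only subtle point is the Jordan-block bound: Theorem \ref{thm:jordan_normal} only guarantees \emph{simple} spectrum (and hence diagonal $\bJ_m$) for $|m|>m_0$, so for the finitely many remaining $m$ the blocks may be nontrivial and the polynomial factor $t^k$ up to $k=M$ is genuinely needed. This is why the bound is stated with the full polynomial $t^M+1$ rather than a constant. I expect this handling of large Jordan blocks for small $|m|$ to be the only step requiring care; the rest is a routine combination of Theorem \ref{thm:jordan_normal} with the block-diagonal exponential formula.
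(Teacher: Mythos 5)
Your proof is correct and follows essentially the same route as the paper: combine the uniform bounds on $\bR_m$, $\bR_m^{-1}$ and the eigenvalue bound $-\lambda^{(0)}$ from Theorem \ref{thm:jordan_normal} with a Jordan-form estimate of the matrix exponential, plus the separate observation that $\|\exp(t\bH)\|_2 \leqslant 1$ for $m=0$. The only difference is that the paper cites the exponential bound $\|\exp(t\bM)\|_2 \leqslant \beta\|\bX\|_2\|\bX^{-1}\|_2\max_i (t^i/i!)\,\mathrm{e}^{\alpha t}$ from the literature, whereas you derive it directly from the block structure and the nilpotent expansion, which is a fine self-contained substitute.
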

The estimate \eqref{eq:expAm1} is a result of Theorem
\ref{thm:jordan_normal} and the following lemma:
\begin{lemma}
For any matrix $\bM$, suppose $\bJ$ is its Jordan normal form and $\bM
= \bX \bJ \bX^{-1}$. The following estimate holds for the norm of
$\exp(t \bM)$:
\begin{equation}
\|\exp(t \bM)\|_2 \leqslant \beta \|\bX\|_2 \|\bX^{-1}\|_2
  \max_{0\leqslant i \leqslant \beta-1}
    \frac{t^i}{i!} \mathrm{e}^{-\alpha t},
\end{equation}
where $\beta$ is the maximum dimension of the Jordan blocks, and
$\alpha$ is the maximum real part of the eigenvalues of $\bM$.
\end{lemma}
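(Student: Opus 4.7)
The plan is to reduce the bound to a block-by-block estimate via the Jordan decomposition and then to exploit the nilpotent structure within each block. First, I would write $\exp(t\bM) = \bX \exp(t\bJ) \bX^{-1}$ using the analytic functional calculus, and apply submultiplicativity of the spectral norm:
\begin{equation*}
  \|\exp(t\bM)\|_2 \leqslant \|\bX\|_2 \|\bX^{-1}\|_2 \|\exp(t\bJ)\|_2.
\end{equation*}
Since $\bJ$ is block-diagonal with Jordan blocks $\bJ_1, \dots, \bJ_s$ of sizes $d_1, \dots, d_s \leqslant \beta$, the matrix $\exp(t\bJ)$ is block-diagonal with blocks $\exp(t\bJ_k)$, and therefore $\|\exp(t\bJ)\|_2 = \max_{k} \|\exp(t\bJ_k)\|_2$.

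Next I would bound a single Jordan block. Writing $\bJ_k = \lambda_k \bI_{d_k} + \bfN_k$ with $\bfN_k$ the standard nilpotent super-shift, the two summands commute and $\bfN_k^{d_k} = 0$, so
\begin{equation*}
  \exp(t\bJ_k) = \mathrm{e}^{t\lambda_k} \sum_{i=0}^{d_k-1} \frac{t^i}{i!} \bfN_k^i.
\end{equation*}
For each $0 \leqslant i \leqslant d_k - 1$, $\bfN_k^i$ is the matrix whose only nonzero entries are $1$'s on the $i$th superdiagonal, hence $\|\bfN_k^i\|_2 = 1$. Applying the triangle inequality and using $\re \lambda_k \leqslant \alpha$, together with the fact that the sum contains at most $d_k \leqslant \beta$ terms, gives
\begin{equation*}
  \|\exp(t\bJ_k)\|_2 \leqslant \mathrm{e}^{\alpha t} \sum_{i=0}^{d_k-1} \frac{t^i}{i!}
    \leqslant \beta \, \mathrm{e}^{\alpha t} \max_{0 \leqslant i \leqslant \beta - 1} \frac{t^i}{i!}.
\end{equation*}
Combining this with the first inequality yields the claimed bound (with the sign in the exponent matching the intended usage: in Corollary~1 one has $\alpha < 0$ and $\lambda^{(0)} = -\alpha$, so $\mathrm{e}^{\alpha t}$ and $\mathrm{e}^{-\lambda^{(0)} t}$ coincide).

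There is no serious obstacle; the proof is essentially a careful bookkeeping of the Jordan form. The only points requiring a moment's care are verifying that the spectral norm of each superdiagonal nilpotent power is exactly $1$ (immediate from its sparsity pattern), that the truncation of the exponential series is lossless because $\bfN_k$ is nilpotent of order $d_k$, and that the factor $\beta$ absorbs the at-most-$\beta$ summands uniformly over blocks of possibly differing sizes.
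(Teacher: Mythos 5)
Your proof is correct, but it is worth noting that the paper does not actually prove this lemma at all: it simply cites K{\aa}gstr\"om (1977) and moves on. What you have supplied is the standard self-contained argument --- conjugate to the Jordan form, use submultiplicativity, reduce to a single block via the fact that the spectral norm of a block-diagonal matrix is the maximum over blocks, split each block as $\lambda_k \bI + \bfN_k$ with commuting summands, truncate the exponential series at order $d_k$ by nilpotency, and use $\|\bfN_k^i\|_2 = 1$ for $0 \leqslant i \leqslant d_k - 1$ together with $|\mathrm{e}^{t\lambda_k}| = \mathrm{e}^{t\re\lambda_k} \leqslant \mathrm{e}^{\alpha t}$ (for $t \geqslant 0$, which is the regime in which the lemma is invoked). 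Every step checks out, and the factor $\beta$ correctly absorbs the at-most-$\beta$ nonnegative summands. Your remark about the exponent is also right and worth flagging: as printed, the lemma says $\mathrm{e}^{-\alpha t}$ with $\alpha$ the \emph{maximum} real part of the eigenvalues, which is a sign slip --- the bound one actually proves, and the one consistent with how Corollary~1 deduces $\exp(-\lambda^{(0)}t)$ from eigenvalues with real parts below $-\lambda^{(0)}$, is $\mathrm{e}^{\alpha t}$. So your proposal not only fills the gap the paper leaves to a citation but also silently corrects the statement to the form the paper needs.
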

The lemma can be found in \cite{Kagstrom1977bounds}. The uniform
boundedness in Corollary \ref{lem:bound_expm} is an immediate result
of \eqref{eq:expAm1} and $\|\exp(t\bA_0)\|_2 = \|\exp(t \bH)\|_2 = 1$.

The estimate \eqref{eq:expAm1} shows the linearly exponential decay
of the non-constant Fourier modes. Compared with
\eqref{eq:quad_decay}, the decay rate is still not fast enough, which
indicates that the recurrence is not fully removed. However, when $M$
is sufficiently large, the filtered spectral method can give accurate
approximation of the decay rate up to some time $T$, and after time
$T$, the values of both the exact solution and the numerical solution
are already small enough, and therefore the numerical result can still
be considered as accurate, although the decay rate may not be exact.
Examples will be given in Section \ref{sec:num} to show the
aforementioned behavior.

\subsection{Advection Equation with an Exponentially Decaying Force}
\label{sec:force}
The above result can be extended to the case with a given decaying
force field. Here we assume that
\begin{equation}
  \label{eq:electric_force}
  E(x, t) = \exp(-\alpha(t))w(x, t), \quad x \in \bbD, 
\end{equation}
where $\alpha(t) \geqslant \alpha_{E} t > 0$ and
$w(x,t) \in L^{\infty}(\bbD\times [0, +\infty))$. Such a force field
mimics the electric force field in the Vlasov-Poisson equations, where
the self-consistent force decays exponentially. Therefore, it can be
expected that the behavior of this equation is similar to the linear
Landau damping. Again, we study the equations of Fourier coefficients
\eqref{eq:fourier_system_vlasov} instead of the original system
\eqref{eq:moment_system_filter}. It will be shown that the system
\eqref{eq:moment_system_filter} has a steady state solution in which
only the constant modes are nonzero. Here we only consider the case
with filter, which means that the last diagonal entry of $\bH$ is
strictly negative.

For the purely advective equation, each $\hbbf^{(m)}$ can be
considered independently, while in this case, the Fourier coefficients
$\hbbf^{(m)}$ are fully coupled for all $m \in \bbZ$. Therefore, we
define a Hilbert space $\mathscr{H}$ whose elements have the form
\begin{equation}
  \label{eq:vector}
  \hat{\bg} = \left(
    \cdots, \hat{\bg}^{(-m)}, \cdots, \hat{\bg}^{(-1)},
    \hat{\bg}^{(0)}, \hat{\bg}^{(1)}, \cdots, \hat{\bg}^{(m)}, \cdots
  \right),
\end{equation}
where each $\hat{\bg}^{(m)}$ is a vector in $\bbC^{M+1}$. For any
vector in $\mathscr{H}$, below we always use the superscript ``$(m)$''
to denote its $m$th component as in \eqref{eq:vector}. The inner
product of $\mathscr{H}$ is defined as
\begin{equation}
\langle \hat{\bg}_1, \hat{\bg}_2 \rangle =
  \sum_{m \in \bbZ} \big( \hat{\bg}_1^{(m)} \big)^* \hat{\bg}_2^{(m)},
  \qquad \forall \hat{\bg}_1, \hat{\bg}_2 \in \mathscr{H}.
\end{equation}
Therefore by Parseval's inequality \eqref{eq:fourier_equal},
$\mathscr{H}$ corresponds to the space $[L^2(\bbD)]^{M+1}$ in the
original space. For the purpose of uniformity, the norm in the Hilbert
space $\mathscr{H}$ is denoted as $\|\cdot\|_2$ below.

To represent the system \eqref{eq:moment_system_filter}, we introduce
two operators $\hat{\bA}$ and $\hat{\bB}(t)$, which are
transformations on $\mathscr{H}$, defined by
\begin{equation}
  \begin{array}{rcl}
    \tilde{\bg} = \hat{\bA} \bg & \Longleftrightarrow &
      \tilde{\bg}^{(m)} = \bA_m \bg^{(m)}, \: \forall m \in \bbZ, \\[5pt]
    \tilde{\bg} = \hat{\bB}(t) \bg & \Longleftrightarrow &
      \displaystyle \tilde{\bg}^{(m)} =
        \sum_{l \in \bbZ} \hat{w}^{(l)}(t) \bB \bg^{(m-l)},
      \: \forall m \in \bbZ.
  \end{array}
\end{equation}
where the matrix $\bA_m$ is defined in Theorem \ref{thm:transport_1}
as $\bA_m = -\imag m k {\bA} + {\bH}$, and $\hat{w}^{(l)}(t)$ are the
Fourier coefficients for $w(x,t)$:
\begin{equation}
  \label{eq:fourier_w}
  w(x, t) = \sum_{m\in \bbZ} \hat{w}^{(m)}(t) \exp(\imag k x),
    \quad m \in \bbZ.
\end{equation}
Thus \eqref{eq:moment_system_filter} can be written as
\begin{equation}
  \label{eq:differential_form}
  \frac{\partial \hbbf(t)}{\partial t} =
    \hat{\bA} \hbbf(t) + \exp(-\alpha(t)) \hat{\bB}(t) \hbbf(t).
\end{equation}
Here $\hbbf(\cdot)$ is a map from $\bbR^+$ to $\mathscr{H}$. Applying
Duhamel's principle, we can obtain its integral form as
\begin{equation}
  \label{eq:integral_form}
  \hat{\bbf}(t) = \exp\left(t\hat{\bA}\right)\hat{\bbf}(0) +
  \int_0^t \exp(-\alpha(s)) \exp\left((t-s)\hat{\bA}\right)
    \hat{\bB}(s) \hat{\bbf}(s)\dd s.
\end{equation}
For the operators $\exp(t \hat{\bA})$ and $\hat{\bB}(t)$, we claim that
\begin{lemma}
  \label{lem:bounds}
  For all $t \geqslant 0$, the operators $\exp(t \hat{\bA})$ and
  $\hat{\bB}(t)$ are uniformly bounded, i.e. there exist constants $C_A$
  and $C_B$ such that
  \begin{equation}
    \label{eq:bounds}
    \left\| \exp(t \hat{\bA}) \right\|_2 \leqslant C_A, \quad
    \left\| \hat{\bB}(t) \right\|_2 \leqslant C_B, \qquad
      \forall t > 0.
  \end{equation}
\end{lemma}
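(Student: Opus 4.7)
The plan is to establish the two bounds separately, exploiting distinct structural features of the two operators. For $\exp(t\hat{\bA})$ I would simply combine the block-diagonal structure of $\hat{\bA}$ with the per-block estimates in Corollary~\ref{lem:bound_expm}. For $\hat{\bB}(t)$ I would pass back to physical space via Parseval's equality~\eqref{eq:fourier_equal}, where the convolution on the Fourier side becomes pointwise multiplication by $w(\cdot,t)$.

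The first bound is essentially immediate: since $\hat{\bA}$ preserves each Fourier mode, $\exp(t\hat{\bA})$ acts on the $m$-th component of $\hat{\bg}\in\mathscr{H}$ as $\exp(t\bA_m)$. Consequently
\[
  \|\exp(t\hat{\bA})\hat{\bg}\|_2^2 = \sum_{m\in\bbZ}\|\exp(t\bA_m)\hat{\bg}^{(m)}\|_2^2 \leqslant \Big(\sup_{m\in\bbZ}\|\exp(t\bA_m)\|_2\Big)^2\|\hat{\bg}\|_2^2,
\]
and Corollary~\ref{lem:bound_expm} already asserts uniform boundedness of the supremum in both $m\in\bbZ$ and $t\geqslant 0$, which yields $C_A$.

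The second bound requires a short detour. Given $\hat{\bg}\in\mathscr{H}$, I would form the physical representation $g(x)=\sum_{m\in\bbZ}\hat{\bg}^{(m)}\exp(\imag m k x)\in [L^2(\bbD)]^{M+1}$, which satisfies $\|g\|_{L^2}^2 = D\,\|\hat{\bg}\|_2^2$ by~\eqref{eq:fourier_equal}. The defining convolution $\sum_l \hat{w}^{(l)}(t)\bB\hat{\bg}^{(m-l)}$ is precisely the statement that $\hat{\bB}(t)\hat{\bg}$ lists the Fourier coefficients of $w(x,t)\bB g(x)$. Parseval's equality, together with the pointwise estimate $|w(x,t)\bB g(x)|\leqslant \|w\|_{L^\infty(\bbD\times[0,\infty))}\|\bB\|_2\,|g(x)|$, then gives
\[
  \|\hat{\bB}(t)\hat{\bg}\|_2^2 = D^{-1}\|w(\cdot,t)\bB g\|_{L^2}^2 \leqslant \|w\|_{L^\infty(\bbD\times[0,\infty))}^2\|\bB\|_2^2\,\|\hat{\bg}\|_2^2,
\]
and since $\|\bB\|_2=\sqrt{M}$ (as noted in Section~\ref{sec:Hermite}), this delivers $C_B=\sqrt{M}\,\|w\|_{L^\infty(\bbD\times[0,\infty))}$, uniform in $t$.

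The only subtlety is this second step: the naive Fourier-space approach via Young's inequality on the convolution would require $\hat{w}(\cdot,t)\in\ell^1$ uniformly in $t$, which is strictly stronger than the hypothesis $w\in L^\infty$. Routing through Parseval recasts $\hat{\bB}(t)$ as the Fourier representation of pointwise multiplication by a bounded function, after which the operator norm estimate is elementary.
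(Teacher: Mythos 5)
Your proof is correct and follows exactly the route the paper indicates (the paper leaves the details to the reader, citing Corollary~\ref{lem:bound_expm} and the boundedness of $w$): the block-diagonal action of $\hat{\bA}$ reduces the first bound to the uniform-in-$m$ and uniform-in-$t$ estimate of the corollary, and recognizing $\hat{\bB}(t)$ as the Fourier-side representation of pointwise multiplication by $w(\cdot,t)\bB$ gives $C_B=\sqrt{M}\,\|w\|_{L^{\infty}}$ via Parseval's equality. Your closing remark is a genuinely useful clarification: a direct Young-type bound on the convolution would demand $\hat{w}(\cdot,t)\in\ell^1$ uniformly in $t$, which the hypothesis $w\in L^{\infty}$ does not supply, so the detour through physical space is the right way to make the paper's one-line hint rigorous.
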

This lemma can be easily obtained by Corollary \ref{lem:bound_expm}
and the boundedness of $w(x,t)$. The detailed proof is left for the
readers. Below we state the main result of this subsection:
\begin{theorem}
  \label{thm:steadysol_vlasov}
  Let $\hbbf(t)$ be the solution to \eqref{eq:integral_form}. There
  exists $\hbbf_{\infty} \in \mathscr{H}$ such that
  \begin{equation}
    \label{eq:limit}
    \lim_{t\rightarrow +\infty} \|\hbbf(t) - \hbbf_{\infty}\|_2 = 0,
  \end{equation}
  and $\hbbf_{\infty}$ satisfies
  \begin{equation}
    \label{eq:f_steady}
    \hbbf_{\infty}^{(m)} = 0, \qquad \forall m \in \bbZ \backslash \{0\}.
  \end{equation}
\end{theorem}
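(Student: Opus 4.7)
The plan is to decompose the problem by Fourier index $m$: on the non-constant blocks $m \neq 0$, Corollary \ref{lem:bound_expm} provides exponential decay of $\exp(t\hat{\bA})$, whereas on the constant block $m = 0$ the operator reduces to $\bH$, whose exponential converges to a diagonal limit instead of vanishing. Combined with the integrable damping $e^{-\alpha(s)}$, both pieces will yield the claimed convergence.

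I would first establish uniform boundedness $\sup_{t \geq 0}\|\hat{\bbf}(t)\|_2 \leq C_\infty$: plugging Lemma \ref{lem:bounds} and the hypothesis $\alpha(s) \geq \alpha_E s$ into \eqref{eq:integral_form} produces a Gronwall inequality with the integrable kernel $e^{-\alpha_E s}$, yielding the bound. Next, let $\Pi$ denote the orthogonal projection of $\mathscr{H}$ onto $\bigoplus_{m \neq 0}\bbC^{M+1}$. Since $\hat{\bA}$ is block-diagonal in $m$, $\Pi$ commutes with $\exp(t\hat{\bA})$ and $\|\exp(u\hat{\bA})\Pi\|_2 \leq \phi(u) := C^{(1)}(u^M + 1)\exp(-\lambda^{(0)}u)$. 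Applying $\Pi$ to \eqref{eq:integral_form} then gives
\[
  \|\Pi\hat{\bbf}(t)\|_2 \leq \phi(t)\|\hat{\bbf}(0)\|_2 + C_B C_\infty \int_0^t e^{-\alpha_E s}\phi(t - s)\,ds,
\]
and splitting the integral at $t/2$---bounding $\phi(t-s) \leq \sup_{u \geq t/2}\phi(u)$ on $[0,t/2]$ and $e^{-\alpha_E s} \leq e^{-\alpha_E t/2}$ on $[t/2,t]$, together with $\int_0^\infty \phi(u)\,du < \infty$---yields $\Pi\hat{\bbf}(t) \to 0$.

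For the $m = 0$ block, $\exp(t\bH)$ converges in operator norm to $\bH_\infty := \lim_{t\to\infty}\exp(t\bH)$ at exponential rate $\min\{|h_i| : h_i < 0\} > 0$ (nonempty since $h_M < 0$). Writing $\exp(u\bH) = \bH_\infty + [\exp(u\bH) - \bH_\infty]$ inside the $m = 0$ component of \eqref{eq:integral_form} and again applying the $[0,t/2]\cup[t/2,t]$ split to the error term shows that $(I - \Pi)\hat{\bbf}(t)$ converges in norm to
\[
  \hat{\bbf}_\infty^{(0)} := \bH_\infty (I - \Pi)\hat{\bbf}(0) + \int_0^\infty e^{-\alpha(s)}\,\bH_\infty \big[(I - \Pi)\hat{\bB}(s)\hat{\bbf}(s)\big]\,ds,
\]
with absolute convergence of the integral guaranteed by the uniform bound and Lemma \ref{lem:bounds}. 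Defining $\hat{\bbf}_\infty \in \mathscr{H}$ to carry $\hat{\bbf}_\infty^{(0)}$ in the $m = 0$ slot and zero elsewhere gives \eqref{eq:f_steady} by construction, and combined with the previous step yields \eqref{eq:limit}. The main obstacle I expect is precisely this $m = 0$ block: because $h_0 = 0$ the exponential $\exp(t\bH)$ does not decay, so a pure tail estimate fails and one must instead exhibit the genuine operator-norm limit $\bH_\infty$ and subtract it out of the Duhamel kernel before the convolution bounds can be applied.
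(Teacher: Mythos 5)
Your proposal is correct and follows essentially the same route as the paper: uniform boundedness via Gronwall, Duhamel plus the decay estimate of Corollary \ref{lem:bound_expm} to kill the $m\neq 0$ modes, and a separate treatment of the $m=0$ block distinguishing the zero and strictly negative diagonal entries of $\bH$ (your operator limit $\bH_\infty$ is just the componentwise case split of the paper packaged as a projection). The only cosmetic differences are your $t/2$-split of the convolution integral, where the paper evaluates the exponential integral directly, and your explicit closed form for the limit, which the paper states componentwise.
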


This theorem shows that the steady solution of
\eqref{eq:moment_system_filter} exists and is a constant. It is known
from Section \ref{sec:nofilter} that when recurrence
appears, some non-constant modes will never decay, which causes the
phenomenon that similar solutions appear again and again when all
these modes are close to the peaks of the waves. Consequently, no
steady state solution exists in such a case. In this sense, the
theorem implies the suppression of recurrence by the filter. Before
proving this theorem, we first show the boundedness of the solution:
\begin{lemma}
  \label{lem:exist_vlasov}
  Let $\hbbf$ be the solution to \eqref{eq:integral_form}. Then there
  exists a constant $C_f$ such that
  \begin{equation}
    \label{eq:l2_vlasov_force}
    \norm{\hbbf(t)} \leqslant C_f \norm{\hbbf(0)}.
  \end{equation}
\end{lemma}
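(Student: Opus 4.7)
The plan is to apply norms to the integral representation \eqref{eq:integral_form}, use the operator-norm bounds furnished by Lemma \ref{lem:bounds}, and close the estimate by a scalar Gronwall argument whose driving kernel is integrable thanks to the exponential decay built into $E$.

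First I would take $\|\cdot\|_2$ on both sides of \eqref{eq:integral_form}. Combining the triangle inequality, the submultiplicativity of the operator norm on $\mathscr{H}$, and the bounds $\|\exp(t\hat{\bA})\|_2 \leqslant C_A$ and $\|\hat{\bB}(s)\|_2 \leqslant C_B$ from \eqref{eq:bounds}, this produces
\begin{equation*}
  \norm{\hbbf(t)} \leqslant C_A \norm{\hbbf(0)}
    + C_A C_B \int_0^t \exp(-\alpha(s)) \norm{\hbbf(s)} \dd s.
\end{equation*}

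The second step is the Gronwall closure. The key point is that the standing hypothesis $\alpha(s) \geqslant \alpha_E s$ with $\alpha_E > 0$ makes $\exp(-\alpha(s))$ integrable on $\bbR^+$, with $\int_0^{+\infty} \exp(-\alpha(s)) \dd s \leqslant 1/\alpha_E$. Applying Gronwall's inequality to the scalar function $u(t) := \norm{\hbbf(t)}$ with the integrable weight $C_A C_B \exp(-\alpha(s))$ then yields
\begin{equation*}
  \norm{\hbbf(t)} \leqslant C_A \norm{\hbbf(0)} \exp\!\left(
    C_A C_B \int_0^t \exp(-\alpha(s)) \dd s \right)
  \leqslant C_A \exp(C_A C_B/\alpha_E) \norm{\hbbf(0)},
\end{equation*}
so the desired estimate \eqref{eq:l2_vlasov_force} holds with $C_f := C_A \exp(C_A C_B/\alpha_E)$, which is independent of $t$.

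The main obstacle is not any individual inequality above but the preliminary matter of making the Gronwall step legitimate: one must know a priori that $u(t)$ is finite and locally bounded so that the integrals are well defined. I would handle this by first establishing well-posedness of \eqref{eq:integral_form} in $\mathscr{H}$ on compact time intervals through a standard Picard iteration, invoking Lemma \ref{lem:bounds} to obtain a contraction on a short initial interval and then iterating forward in time. Once continuity of $t \mapsto \hbbf(t)$ into $\mathscr{H}$ has been secured, the bound above is automatic, uniform in $t$, and gives the stated constant $C_f$.
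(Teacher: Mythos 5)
Your proof is correct and follows essentially the same route as the paper: take norms in \eqref{eq:integral_form}, apply the bounds of Lemma \ref{lem:bounds} together with $\alpha(s)\geqslant\alpha_E s$, and close with Gronwall to get $C_f = C_A\exp(C_A C_B/\alpha_E)$ (the paper states the constant with an extra $\|w\|_\infty$ factor, but that factor is already absorbed into $C_B$, so your version is the cleaner one). Your added remark on establishing local well-posedness by Picard iteration before invoking Gronwall is a reasonable point of rigor that the paper simply takes for granted.
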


\begin{proof}
  Taking the norm on both sides of \eqref{eq:integral_form} and
  plugging in the inequalities \eqref{eq:bounds}, we obtain
  \begin{equation}
    \begin{split}
    \norm{\hbbf(t)} & \leqslant C_A\norm{\hbbf(0)} +
      C_A\int_0^t \exp(-\alpha(s)) \norm{\hat{\bB}\hbbf(s)} \dd s \\
    & \leqslant C_A \norm{\hbbf(0)} +
      C_A C_B \int_0^t \exp(-\alpha_E s) \norm{\hbbf(s)} \dd s
    \end{split}
  \end{equation}
  By Gronwall's inequality, we immediately have the estimation
  \eqref{eq:l2_vlasov_force} with the constant $C_f = C_A
  \exp(\|w\|_{\infty} C_A C_B/ \alpha_E)$.
\end{proof}

Now we present the proof of Theorem \ref{thm:steadysol_vlasov}:
{\renewcommand\proofname{Proof of Theorem \ref{thm:steadysol_vlasov}}
\begin{proof}
  Let $\hat{\bb}(t) = \hat{\bB}(t) \hbbf(t)$. The boundedness of
  $\hat{\bB}(t)$ and Lemma \ref{lem:exist_vlasov} yield that
  \begin{equation}
    \label{eq:b}
    \|\hat{\bb}(t)\|_2 \leqslant C_B C_f \|\hbbf(0)\|_2.
  \end{equation}
  Writing the integral system \eqref{eq:integral_form} as
  \begin{equation} \label{eq:fm_integral_form}
    \hat{\bbf}^{(m)}(t) = \exp(t\bA_m) \hat{\bbf}^{(m)}(0) +
    \int_0^t \exp(-\alpha(s)) \exp((t-s) \bA_m)
      \hat{\bb}^{(m)} (s) \dd s, \quad \forall m \in \bbZ,
  \end{equation}
  we can bound $\left\| \hbbf^{(m)}(t) \right\|_2$ with $m \in \bbZ
  \backslash \{0\}$ by
  {\small \begin{equation} \label{eq:nonzero_m}
    \begin{split}
      & \left( \sum_{m \in \bbZ \backslash \{0\}}
          \left\| \hbbf^{(m)}(t) \right\|_2^2 \right)^{1/2}
	\leqslant C^{(1)} (t^M+1) \exp(-\lambda^{(0)}t)
          \left( \sum_{m \in \bbZ \backslash \{0\}}
            \left\| \hbbf^{(m)}(0) \right\|_2^2
          \right)^{1/2} \\
      & \qquad + \int_0^t C^{(1)} [(t-s)^M+1]
        \exp \left( -\lambda^{(0)}(t-s) \right) \exp(-\alpha_E s)
          \left( \sum_{m \in \bbZ \backslash \{0\}}
            \left\| \hat{\bb}^{(m)}(s) \right\|_2^2 \right)^{1/2} \dd s \\
      & \quad \leqslant C^{(1)} (t^M+1) \exp(-\lambda^{(0)}t)
      \left( \|\hbbf(0)\|_2 +
        \int_0^t \exp \left( (\lambda^{(0)} - \alpha_E) s \right)
        \|\hat{\bb}(s)\|_2 \dd s
      \right) \\
      & \quad \leqslant C^{(1)} (t^M+1) \left(
        \exp(-\lambda^{(0)}t) + 
        \frac{C_B C_f}{|\lambda^{(0)} - \alpha_E|}
          \left[ \exp(-\lambda^{(0)}t) + \exp(-\alpha_E t) \right]
      \right).
    \end{split}
  \end{equation}
}
  The right hand side goes to zero as $t$ goes to infinity. Thus it
  remains only to prove $\hbbf^{(0)}(t)$ has a limit.
  
  When $m = 0$, the equation \eqref{eq:fm_integral_form} becomes
  \begin{equation}
    \hat{\bbf}^{(0)}(t) = \exp(t\bH) \hat{\bbf}^{(0)}(0) +
    \int_0^t \exp(-\alpha(s)) \exp((t-s) \bH)
      \hat{\bb}^{(0)} (s) \dd s.
  \end{equation}
  For any $i = 0,\cdots,M$, if $h_i < 0$, we can still get
  $\hat{f}_i^{(0)}(+\infty) = 0$ using the same method as in
  \eqref{eq:nonzero_m}. If $h_i = 0$,
  \begin{equation}
    \lim_{t\rightarrow +\infty} \hat{f}_i^{(0)}(t) = \hat{f}_i^{(0)}(0)
      + \int_0^{+\infty} \exp(-\alpha(s)) \hat{b}_i^{(0)} (s) \dd s.
  \end{equation}
  The uniform boundedness of $\hat{b}_i^{(0)}$ and $\alpha(t)
  \geqslant \alpha_E t > 0$ shows the convergence of the integral on
  the right hand side, which concludes the proof.
\end{proof}}

\begin{remark}
In Theorem \ref{thm:steadysol_vlasov}, one can also observe an
exponential convergence rate to the steady state solution. However,
the convergence rate depends on both the eigenvalue bound
$\lambda^{(0)}$ and the decay rate of the force field $\alpha_E$,
whereas in Section \ref{sec:supp}, the convergence rates depend only
on the eigenvalue bounds. In fact, if the force field does not decay,
the steady state solution may not exist. A simple example is to let
$E$ be a constant. Then the distribution function will keep moving in
the velocity space, and there is no mechanism which balances such a
force to achieve the steady state. It can also be expected that
current velocity discretization cannot well describe the system when
$t$ is large.
\end{remark}


\section{Linear Landau Damping in the Vlasov-Poisson Equation}
\label{sec:Vlasov}
In this section, we will generalize the result to the linear Landau
damping problem, for which the recurrence in the electric energy has
been widely observed \cite{pezzi2016collisional, Filbet_1}. To study
the linear Landau damping, we consider the dimensionless
Vlasov-Poisson (VP) equation which models the motion of a collection
of charged particles in the self-consistent electric field. For a
given distribution function $f(x,\xi,t)$, the self-consistent electric
field $E_{\mathrm{sc}}(x,t)$ is given by
\begin{equation} 
  \label{eq:normal_force} 
  \pd{E_{\mathrm{sc}}(x,t)}{x} =  \left(\int_{\bbR} f(x,\xi, t)\dd\xi
    -\frac{1}{D} \int_{\bbD\times\bbR} f(x,\xi, t)\dd\xi \dd x\right).
\end{equation}
with constraint
\begin{equation} \label{eq:zero_mean}
  \int_{\bbD} E_{\mathrm{sc}}(x,t) \dd x = 0.
\end{equation}
Thus the Vlasov-Poisson equation can be written as \eqref{eq:vlasov}
with $E(x,t) = E_{\mathrm{sc}}(x,t)$. The initial condition is a
uniform Gaussian distribution with perturbed electric charge density
in the $x$-space:
\begin{equation}
  \label{eq:initial_vlasov}
  f(x, \xi, 0) = \frac{1}{\sqrt{2\pi}}
    [1 + \epsilon \exp(\imag kx)] \exp\left(-\frac{\xi^2}{2}\right).
\end{equation}
When $\epsilon$ is small, it is known that the total electric energy
\begin{equation} \label{eq:electric_energy}
  \mE(t) = \sqrt{\int_{\bbD} |E_{\mathrm{sc}}(x,t)|^2 \dd x}
\end{equation}
decays exponentially. For this example, we say that a recurrence
exists in some numerical methods if the electric energy does not decay
with time.

\subsection{Asymptotic expansion and recurrence}
Following the classical analysis \cite{Goldston2016Introduction}, we
expand the coefficients $\bbf = (f_0, f_1, \cdots, f_M)^T$ in terms of
$\epsilon$:
\begin{equation}
  \label{eq:ce_f}
  \bbf(x, t) = \hat{\bbf}^{(0)}(t)
    + \epsilon \hat{\bbf}^{(1)}(t)\exp(\imag k x)
    + \epsilon^2 \hat{\bbf}^{(2)}(t)\exp(2\imag k x) + \cdots,
\end{equation}
where
$\hat{\bbf}^{(m)} = (\hat{f}_0^{(m)}, \hat{f}_1^{(m)}, \cdots,
\hat{f}_M^{(m)})^T$.
Correspondingly, the electric field $E(x, t)$ is expanded as
\begin{equation}
  \label{eq:ce_E}
  E(x, t) = \epsilon \hat{E}^{(1)}(t)\exp(\imag k x)
    + \epsilon^2 \hat{E}^{(2)}(t)\exp(2 \imag k x)
    + \epsilon^3 \hat{E}^{(3)}(t)\exp(3 \imag k x) + \cdots.
\end{equation}
Note that the definitions of $\hat{\bbf}^{(m)}$ and $\hat{E}^{(m)}$ are
slightly different from those defined in \eqref{eq:fourier_expansion}.
In this section, these symbols denote the Fourier coefficients scaled
by $\epsilon^m$. 
Inserting \eqref{eq:ce_f}\eqref{eq:ce_E} and
\eqref{eq:hermit_expansion} into \eqref{eq:normal_force}, we get that
\begin{equation}
  \label{eq:E_F}
  \hat{E}^{(m)} = -\frac{\imag}{mk} \hat{f}^{(m)}_0,
    \qquad \forall m = 1,2,\cdots.
\end{equation}

The equations for the coefficients of all orders can be obtained by
substituting \eqref{eq:ce_f} and \eqref{eq:ce_E} into
\eqref{eq:moment_system_filter} and matching the terms with the same
orders of $\epsilon$:
\begin{align*}
& O(1): & & \pd{{\hbbf}^{(0)}}{t} = \bH {\hbbf}^{(0)}, \quad
  \hbbf^{(0)}(0) = (1, 0,\cdots, 0)^T. \\
& O(\epsilon): & &
  \pd{{\hbbf}^{(1)}}{t} + \imag k \bA {\hbbf}^{(1)} +
  \frac{\imag}{k} \hat{f}^{(1)}_0{\bB} {\hbbf}^{(0)} = \bH {\hbbf}^{(1)},
  \quad \hbbf^{(1)}(0) = (1, 0,\cdots, 0)^T. \\
& O(\epsilon^m): & &
  \pd{{\hbbf}^{(m)}}{t} + \imag m k \bA {\hbbf}^{(m)} +
  \frac{\imag}{mk} \hat{f}^{(m)}_0{\bB} {\hbbf}^{(0)} +
  \frac{\imag}{k} \sum_{l=1}^{m-1} \frac{1}{l} \hat{f}^{(l)}_0{\bB} {\hbbf}^{(m-l)}
    = \bH {\hbbf}^{(m)}, \quad \hbbf^{(m)}(0) = 0.
\end{align*}
The last equation holds for all $m \geqslant 2$. Recalling that $\bH$ is a
diagonal matrix with its first entry being zero, we can easily obtain
from the $O(1)$ equation that $\hbbf^{(0)}(t) \equiv \hbbf^{(0)}(0) =
(1, 0,\cdots, 0)^T$. Therefore the other two equations can be
rewritten as
\begin{gather}
\label{eq:first_order}
  \pd{{\hbbf}^{(1)}}{t} +
    \imag k \left( \bA + \frac{1}{k^2} \bG \right){\hbbf}^{(1)}
  = \bH {\hbbf}^{(1)}, \qquad \hbbf^{(1)}(0) = (1, 0,\cdots, 0)^T; \\
\label{eq:high_order}
  \pd{{\hbbf}^{(m)}}{t} +
    \imag m k  \left( \bA + \frac{1}{(mk)^2} \bG \right){\hbbf}^{(m)} +
    \frac{\imag}{k} \sum_{l=1}^{m-1} \hat{f}^{(l)}_0{\bB} {\hbbf}^{(m-l)}
    = \bH {\hbbf}^{(m)}, \qquad \hbbf^{(m)}(0) = 0.
\end{gather}
Here we have used the symbol $\bG$ to denote the $(M+1)\times (M+1)$
matrix with only one nonzero entry locating at the second row and
first column:
\begin{equation}
\bG = \begin{pmatrix}
  0 & 0 & \cdots & 0 \\ 1 & 0 & \cdots & 0 \\
  0 & 0 & \cdots & 0 \\ \vdots & \vdots & \ddots& \vdots \\
  0 & 0 & \cdots & 0
\end{pmatrix}.
\end{equation}

When $\bH = 0$, the recurrence phenomenon already exists in the
first-order equation \eqref{eq:first_order}. To observe this, we
introduce the following lemma:
\begin{lemma}
  \label{lem:matrixhatB}
  For any $m > 0$, there exists a diagonal matrix $\bD_m$ such that
  such that the matrix
  \begin{displaymath}
  \bD_m \left( \bA + \frac{1}{(mk)^2}\bG \right) \bD_m^{-1}
  \end{displaymath}
  is symmetric and tridiagonal, and therefore the matrix
  $\bA + (mk)^{-2}\bG$ is real diagonalizable.
\end{lemma}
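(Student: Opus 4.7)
The plan is to construct $\bD_m$ explicitly by exploiting the very simple structure of the perturbation. Observe that $\bG$ only alters one entry of $\bA$: the matrix $\bA + (mk)^{-2}\bG$ is still tridiagonal, agreeing with $\bA$ everywhere except at position $(2,1)$, where the entry is $1 + (mk)^{-2}$ instead of $1$. In particular, the matrix fails to be symmetric only because of this single entry, and all off-diagonal entries remain strictly positive.

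I would then look for a diagonal $\bD_m = \mathrm{diag}(d_1,\ldots,d_{M+1})$ with positive entries such that the $(i,j)$-entry $(d_i/d_j)(\bA + (mk)^{-2}\bG)_{ij}$ of the conjugated matrix satisfies the symmetry condition $d_i^2 a_{ij} = d_j^2 a_{ji}$ for each adjacent pair $(i,i+1)$. For pairs with $i \geqslant 2$ the off-diagonal entries $a_{i,i+1}$ and $a_{i+1,i}$ already coincide (both equal $\sqrt{i}$), forcing $d_i = d_{i+1}$. For the pair $(1,2)$ the condition reads $d_1^2 = d_2^2 (1 + (mk)^{-2})$, which fixes $d_2/d_1$. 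Setting $d_1 = 1$ and $d_i = 1/\sqrt{1+(mk)^{-2}}$ for $i \geqslant 2$ does the job.

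A direct check then shows that the conjugated matrix is tridiagonal with equal upper- and lower-diagonal entries: the $(1,2)$ and $(2,1)$ entries both become $\sqrt{1+(mk)^{-2}}$, and the remaining off-diagonal pairs are unchanged from $\bA$. Hence $\bD_m(\bA + (mk)^{-2}\bG)\bD_m^{-1}$ is real symmetric tridiagonal with nonzero subdiagonal, and is therefore orthogonally diagonalizable over $\bbR$. Similarity preserves diagonalizability, so $\bA + (mk)^{-2}\bG$ is itself real diagonalizable.

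There is no real obstacle here; the only thing to watch is that $1 + (mk)^{-2} > 0$ so that the square roots defining $\bD_m$ are real and positive, which is immediate. As a side benefit, the construction shows that the diagonalization is well-behaved uniformly in $m$, since $\bD_m \to \bI$ as $|m| \to \infty$, which may be useful when combining this lemma with eigenvalue/eigenvector perturbation arguments analogous to those in Theorem~\ref{thm:jordan_normal}.
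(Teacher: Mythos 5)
Your proposal is correct and is essentially identical to the paper's proof: the paper also exhibits the explicit diagonal matrix $\bD_m = \mathrm{diag}\{1, mk/\sqrt{(mk)^2+1}, \ldots, mk/\sqrt{(mk)^2+1}\}$, which is exactly your choice since $mk/\sqrt{(mk)^2+1} = 1/\sqrt{1+(mk)^{-2}}$, and verifies that the conjugated matrix is symmetric tridiagonal with $(1,2)$ and $(2,1)$ entries equal to $\sqrt{1+(mk)^{-2}}$. Your closing observation that $\bD_m \to \bI$ as $|m|\to\infty$ is indeed the fact the paper exploits later in the proof of Lemma \ref{lem:eig_hatA}.
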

\begin{proof}
  The matrix $\bD_m$ can be explicitly written as
  \begin{equation}
  \label{eq:D_m}
    \bD_m = \mathrm{diag} \left\{1, \frac{mk}{\sqrt{(mk)^2+1}},
      \frac{mk}{\sqrt{(mk)^2+1}}, \cdots,
      \frac{mk}{\sqrt{(mk)^2+1}} \right\}.
  \end{equation}
  It can then be easily verified that
  \begin{equation*}
    {\bD_m}\left(\bA + \frac{1}{(km)^2}\bG\right)\bD_m^{-1} = 
    \begin{pmatrix}
      0 & \sqrt{1+(mk)^{-2}} & 0 & 0& \dots & 0\\
      \sqrt{1+(mk)^{-2}} & 0 & \sqrt{2} & 0& \dots & 0\\
      0 & \sqrt{2} & 0 & \sqrt{3}& \dots & 0\\
      0 & \ddots & \ddots & \ddots &\ddots & \vdots\\
      \vdots & \ddots & 0 & \sqrt{M-1} & 0 & \sqrt{M}\\
      0  & \dots & 0 & 0 & \sqrt{M} & 0
    \end{pmatrix}.
  \end{equation*}
\end{proof}
The above lemma shows that when $\bH = 0$, all the eigenvalues of
$\imag k (\bA + k^{-2}\bG)$ are purely imaginary, which indicates that
$\hbbf^{(1)}$ does not decay as time increases. Consequently, the
relation \eqref{eq:E_F} shows that the electric energy will not decay,
resulting in the recurrence in the simple discretization of velocity
without using filters.

\subsection{Suppression of recurrence}
This section focuses on the effect of the filter. As in Section
\ref{sec:supp}, we assume again that the filter matrix $\bH$ is a
negative semidefinite diagonal matrix with its first diagonal entry
being zero and last diagonal entry being nonzero. In this section, we
are going to show an exponential decay of the solution, which
suppresses the recurrence. In detail, we have the following theorem:
\begin{theorem}
  \label{thm:vp}
  Let $\bbf(x,t)$ be the series \eqref{eq:ce_f} where $\hbbf^{(0)}(t)
  \equiv (1,0,\cdots,0)^T$ and $\hbbf^{(m)}$ with $m \geqslant 1$ are
  solved from the equations \eqref{eq:first_order}%
  \eqref{eq:high_order}. There exist constants $C^{(3)}$ and
  $\lambda^{(1)}$ such that
  \begin{equation}
    \label{eq:sol_vp}
    \norm{\bbf(\cdot,t) - \hbbf^{(0)}} \leqslant
      C^{(3)} \left(t^M+1 \right)\exp\left(-\lambda^{(1)}t \right),
  \end{equation}
  if $\epsilon$ is sufficiently small.
\end{theorem}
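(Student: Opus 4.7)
The plan is to mirror the spectral-theoretic development of Section~\ref{sec:supp} for the modified generator appearing in \eqref{eq:first_order}--\eqref{eq:high_order}, and then iterate through Duhamel's formula to control every order in $\epsilon$. Define $\tbA_m := -\imag m k \bigl(\bA + (mk)^{-2}\bG\bigr) + \bH$ for each $m \geq 1$, so that \eqref{eq:first_order} becomes $\partial_t \hbbf^{(1)} = \tbA_1 \hbbf^{(1)}$ and \eqref{eq:high_order} becomes $\partial_t \hbbf^{(m)} = \tbA_m \hbbf^{(m)} - (\imag/k) \sum_{l=1}^{m-1} \hat{f}_0^{(l)} \bB \hbbf^{(m-l)}$. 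The goal is to extend Corollary~\ref{lem:bound_expm} to $\tbA_m$, then inductively bound each $\|\hbbf^{(m)}(t)\|_2$, and finally sum the resulting series for small enough $\epsilon$.

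The first step uses Lemma~\ref{lem:matrixhatB}: the diagonal similarity $\bD_m$ transforms $\tbA_m$ into $\bD_m \tbA_m \bD_m^{-1} = -\imag m k\, \boldsymbol{S}_m + \bH$, where $\boldsymbol{S}_m := \bD_m(\bA + (mk)^{-2}\bG)\bD_m^{-1}$ is symmetric tridiagonal with nonzero subdiagonal entries. Because $\boldsymbol{S}_m$ differs from $\bA$ only in the $(1,2)$ and $(2,1)$ entries (so $\boldsymbol{S}_m \to \bA$ as $|m| \to \infty$) and both $\bD_m$ and $\bD_m^{-1}$ are uniformly bounded for $m \geq 1$, the arguments of Theorem~\ref{thm:transport_1} and Theorem~\ref{thm:jordan_normal} apply verbatim to $\bD_m \tbA_m \bD_m^{-1}$: all eigenvalues have strictly negative real parts, their real parts are uniformly bounded above by some $-\lambda^{*} < 0$, and the eigenvector matrices are uniformly bounded. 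Undoing the conjugation yields constants $C^{*},\lambda^{*} > 0$ with
\[
\|\exp(t\tbA_m)\|_2 \leq C^{*}(t^M+1)\exp(-\lambda^{*} t), \qquad \forall\, m \geq 1,\ t \geq 0.
\]

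Next, I would iterate Duhamel's formula and prove by induction on $m$ that $\|\hbbf^{(m)}(t)\|_2 \leq C_m(t^M+1)\exp(-\lambda^{*} t)$, where $C_1 = C^{*}$ and, for $m \geq 2$, $C_m = \gamma \sum_{l=1}^{m-1} C_l C_{m-l}$ with a constant $\gamma$ depending only on $\|\bB\|_2$, $k$, $\lambda^{*}$, $M$, and $C^{*}$. The crucial computation is the integral estimate
\[
\int_0^t \bigl((t-s)^M+1\bigr)(s^M+1)^2 \exp(-\lambda^{*}(t-s))\exp(-2\lambda^{*} s)\,\dd s \leq K(t^M+1)\exp(-\lambda^{*}t),
\]
which follows by factoring $\exp(-\lambda^{*}t)$ out, expanding the polynomial in $(t-s)$ and $s$, and using $\int_0^t (t-s)^a s^b \exp(-\lambda^{*} s)\,\dd s \leq t^a\, b!/(\lambda^{*})^{b+1}$ with $a \leq M$ and $b \leq 2M$. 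This is exactly what pins the polynomial prefactor at degree $M$ through every step of the cascade, despite the source term containing the square $(s^M+1)^2$.

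Finally, the generating function $\Phi(x) = \sum_{m \geq 1} C_m x^m$ satisfies $\Phi = C^{*} x + \gamma \Phi^2$ and is therefore analytic on $|x| < 1/(4\gamma C^{*})$. Choosing any $\epsilon_0 < 1/(4\gamma C^{*})$ and any $\epsilon \leq \epsilon_0$, the monotonicity of $\Phi$ (its coefficients are nonnegative) combined with Parseval's identity~\eqref{eq:fourier_equal} and the elementary inequality $\sum_m a_m^2 \leq (\sum_m a_m)^2$ for nonnegative $a_m$ yields
\[
\|\bbf(\cdot,t) - \hbbf^{(0)}\|_2 \leq \sqrt{D}\,\Phi(\epsilon_0)(t^M+1)\exp(-\lambda^{*}t),
\]
which is the claim with $\lambda^{(1)} = \lambda^{*}$ and $C^{(3)} = \sqrt{D}\,\Phi(\epsilon_0)$. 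The main obstacle is the preservation of the degree-$M$ polynomial factor through the Catalan-like convolution in $m$: the integral estimate above is what trades the accumulating $s^{2M}$ weight against the exponential $\exp(-\lambda^{*} s)$ to leave at most a single factor of $t^M$ from the kernel, and getting the right constant $\gamma$ that keeps $\Phi$'s radius of convergence uniform in $M$ is the only delicate bookkeeping.
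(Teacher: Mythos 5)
Your proposal is correct and follows essentially the same route as the paper: the same conjugation by $\bD_m$ to reduce to a symmetric tridiagonal perturbation of $\bA$, the same uniform spectral bound $\|\exp(t\bG_m)\|_2 \leqslant C(t^M+1)\exp(-\lambda^{(1)}t)$ obtained by transplanting Theorems \ref{thm:transport_1} and \ref{thm:jordan_normal}, the same Duhamel induction with the same integral estimate absorbing $(s^M+1)^2\exp(-\lambda^{(1)}s)$ into a constant, and the same Catalan-type convolution recursion for the constants. The only (minor) divergence is the last combinatorial step: the paper solves the recursion for $K^{(m)}$ exactly via Jonah's identity and then bounds the resulting central binomial coefficients, whereas you bound the recursion by the Catalan generating function $\Phi = C^{*}x + \gamma\Phi^2$ and invoke its radius of convergence $1/(4\gamma C^{*})$ --- both give the same geometric growth and essentially the same smallness threshold for $\epsilon$, and your remark about uniformity in $M$ is unnecessary since $M$ is fixed in the statement.
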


Similar to the case of the advection equation, the decay rate
$\lambda^{(1)}$ is associated with the eigenvalues of matrices
appearing in the equations. The following lemma gives a precise
definition of $\lambda^{(1)}$:
\begin{lemma}
  \label{lem:eig_hatA}
  For all $m \in \bbZ \backslash \{0\}$, define
  \begin{equation}
  \bG_m := -\imag m k \left( \bA + \frac{1}{(mk)^2}\bG \right) + \bH.
  \end{equation}
  Then there exists a uniform constant $\lambda^{(1)} > 0$, such that
  all the eigenvalues of $\bG_m$ have real parts less than
  $-\lambda^{(1)}$. Furthermore, there exists a constant $C^{(4)}$
  such that
  \begin{equation}
  \label{eq:exp_G}
    \| \exp(t \bG_m) \|_2 \leqslant C^{(4)}
      (t^M + 1) \exp \left( -\lambda^{(1)} t \right),
    \qquad \forall m \in \bbZ \backslash \{0\}.
  \end{equation}
\end{lemma}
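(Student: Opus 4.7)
The plan is to promote Theorem~\ref{thm:transport_1}, Theorem~\ref{thm:jordan_normal}, and Corollary~\ref{lem:bound_expm} from the pure advection setting of Section~\ref{sec:con} to the present setting via the symmetrization supplied by Lemma~\ref{lem:matrixhatB}. Since $\bA$, $\bG$, and $\bH$ are all real, $\bG_{-m}$ is the entrywise complex conjugate of $\bG_m$, so its eigenvalues are conjugates of those of $\bG_m$ and it suffices to treat $m > 0$. For such $m$, let $\bD_m$ and $\tilde{\bA}_m := \bD_m(\bA + (mk)^{-2}\bG)\bD_m^{-1}$ be as in Lemma~\ref{lem:matrixhatB}. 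Because $\bD_m$ and $\bH$ are both diagonal they commute, so
\begin{equation*}
  \tilde{\bG}_m := \bD_m \bG_m \bD_m^{-1} = -\imag m k \tilde{\bA}_m + \bH,
\end{equation*}
which has the same structure as the matrix $\bA_m$ of Theorem~\ref{thm:transport_1}, with $\tilde{\bA}_m$ symmetric tridiagonal having nonzero subdiagonal. The matrices $\bG_m$ and $\tilde{\bG}_m$ share spectra.

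The first step mimics the proof of Theorem~\ref{thm:transport_1} to show every eigenvalue $\lambda$ of $\tilde{\bG}_m$ has strictly negative real part. For a unit eigenvector $\br$, $2\re\lambda = 2\br^\ast \bH \br \leqslant 0$ is immediate; if equality held then $\bH \br = 0$ would force the last component of $\br$ to vanish, and $\tilde{\bA}_m \br = -\lambda/(\imag mk)\,\br$ would then contradict the third statement of Lemma~\ref{lem:root_hermite} applied to the symmetric tridiagonal matrix $\tilde{\bA}_m$.

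The second step extracts a uniform gap $\lambda^{(1)}$ by splitting into large and small $|m|$. From the explicit form of $\bD_m$ in \eqref{eq:D_m}, $\bD_m \to \bI$ and $\tilde{\bA}_m \to \bA$ as $m \to \infty$, so $\frac{\imag}{mk}\tilde{\bG}_m = \tilde{\bA}_m + \frac{\imag}{mk}\bH \to \bA$. Since $\bA$ has distinct eigenvalues (Lemma~\ref{lem:root_hermite}), the limit arguments in the proof of Theorem~\ref{thm:jordan_normal} apply essentially verbatim: for $m > m_0$, $\tilde{\bG}_m$ is diagonalizable, its diagonalization matrices are uniformly bounded, and its eigenvalue real parts converge to the strictly negative limits $\br_i^\ast \bH \br_i$. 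For the remaining finitely many values $0 < m \leqslant m_0$, Step~1 already provides strictly negative real parts and a finite condition number, so taking a minimum/maximum over this finite set yields uniform $\lambda^{(1)} > 0$ and uniform control of the Jordan similarities.

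Finally, plugging these uniform bounds into the Kågström estimate cited after Corollary~\ref{lem:bound_expm} gives $\|\exp(t\tilde{\bG}_m)\|_2 \leqslant C(t^M+1)\exp(-\lambda^{(1)}t)$, and since $\exp(t\bG_m) = \bD_m^{-1}\exp(t\tilde{\bG}_m)\bD_m$ with $\|\bD_m\|_2\|\bD_m^{-1}\|_2$ uniformly bounded in $m$ (indeed tending to $1$), the same bound transfers to $\bG_m$ with a slightly larger constant $C^{(4)}$. The main obstacle throughout is the non-symmetry of $\bG$, but this is fully absorbed by the diagonal conjugation of Lemma~\ref{lem:matrixhatB}, which leaves $\bH$ invariant and converges to the identity as $|m|\to\infty$, so the eigenvector-continuity machinery of Section~\ref{sec:con} carries over unchanged.
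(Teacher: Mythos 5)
Your proposal is correct and follows essentially the same route as the paper: conjugate $\bG_m$ by the diagonal matrix $\bD_m$ of Lemma \ref{lem:matrixhatB} to reduce to a symmetric-tridiagonal-plus-$\bH$ matrix, rerun the arguments of Theorem \ref{thm:transport_1} and Theorem \ref{thm:jordan_normal} for the strict negativity and the uniform gap, and transfer the Kågström-type bound back via the uniformly bounded condition number of $\bD_m$. The only differences are presentational (you handle $m<0$ by conjugation and make the finite-versus-large $|m|$ split explicit, details the paper leaves to the reader).
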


\begin{proof}
Consider the matrix
\begin{equation}
\label{eq:DGDinv}
  \bD_m \bG_m \bD_m^{-1} = -\imag  m k \left(\bD_m
    \left( \bA + \frac{1}{(mk)^2}\bG \right) \bD_m^{-1}\right) + \bH,
\end{equation}
which has the same eigenvalues as $\bG_m$. Due to Lemma
\ref{lem:matrixhatB}, we see that the above matrix is symmetric and
tridiagonal, and all its subdiagonal entries are nonzero. Thus, we can
use the same strategy as in the proof of Theorem \ref{thm:transport_1}
to show that all the eigenvalues of \eqref{eq:DGDinv} have negative
real parts. Similarly, since
\begin{equation*}
   \lim\limits_{m\rightarrow \infty}\bD_m 
    \left( \bA + \frac{1}{(mk)^2}\bG \right) \bD_m^{-1} = \bA,
\end{equation*}
showing the existence of a uniform bound $\lambda^{(1)}$ is an analog
of the proof of Theorem \ref{thm:jordan_normal}. The details are left
to the readers.

To show the inequality \eqref{eq:exp_G}, we first follow the proof of
Corollary \ref{lem:bound_expm} to get
\begin{equation}
  \label{eq:exp_DGD}
  \left\|\exp(t \bD_m \bG_m \bD_m^{-1} )\right\|_2  \leqslant
    C^{(2)} \left(t^M + 1\right)\exp\left(-\lambda^{(1)}t\right),
  \quad \forall m \in \bbZ \backslash \{0\}.
\end{equation}
where $C^{(2)}$ is determined in the same way as $C^{(1)}$ in
Corollary \ref{lem:bound_expm} and Theorem \ref{thm:jordan_normal}.
Hence,
\begin{displaymath}
\begin{split}
  \|\exp(t \bG_m)\|_2 & \leqslant
    \|\bD_m^{-1}\|_2 \|\exp(t \bD_m \bG_m \bD_m^{-1})\|_2 \|\bD_m\|_2 \\
  &= \sqrt{1+\frac{1}{(mk)^2}}\|\exp(t \bD_m \bG_m \bD_m^{-1})\|_2
    \leqslant \sqrt{1+\frac{1}{k^2}} C^{(2)} (t^M + 1)
      \exp\left(-\lambda^{(1)}t\right).
\end{split}
\end{displaymath}
Thus \eqref{eq:exp_G} holds for $C^{(4)} = \sqrt{1+k^{-2}} C^{(2)}$.
\end{proof}

In the above lemma, the estimate \eqref{eq:exp_G} gives the basic
form of the decay rate. To turn \eqref{eq:exp_G} into an estimate of
the solution \eqref{eq:sol_vp}, we follow the steps below:
\begin{enumerate}
\item Show that each non-constant term in the series \eqref{eq:ce_f}
  decays exponentially.
\item Show the convergence of the series for small $\epsilon$.
\end{enumerate}
The first step is established by proving the following theorem:
\begin{theorem}
Let $\hbbf^{(m)}$, $m \geqslant 1$ be the solutions to the equations
\eqref{eq:first_order}\eqref{eq:high_order}. For each positive
integer $m$, there exists a constant $K^{(m)}$ such that
\begin{equation}
\label{eq:fm_est}
\left\| \hbbf^{(m)}(t) \right\|_2 \leqslant
  K^{(m)} (t^M + 1) \exp(-\lambda^{(1)}t).
\end{equation}
\end{theorem}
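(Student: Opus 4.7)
The plan is to prove the bound by induction on $m$, using Lemma \ref{lem:eig_hatA} together with Duhamel's principle applied to equations \eqref{eq:first_order} and \eqref{eq:high_order}.

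\textbf{Base case ($m=1$).} The equation \eqref{eq:first_order} can be rewritten as $\partial_t \hbbf^{(1)} = \bG_1 \hbbf^{(1)}$, so $\hbbf^{(1)}(t) = \exp(t \bG_1) \hbbf^{(1)}(0)$. Applying \eqref{eq:exp_G} directly gives $\|\hbbf^{(1)}(t)\|_2 \leqslant C^{(4)} (t^M + 1) \exp(-\lambda^{(1)} t)$, so $K^{(1)} = C^{(4)}$ works.

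\textbf{Inductive step.} Assume the bound holds for all orders $1, 2, \ldots, m-1$. Rewriting \eqref{eq:high_order} as
\begin{equation*}
\pd{\hbbf^{(m)}}{t} = \bG_m \hbbf^{(m)} -\frac{\imag}{k} \sum_{l=1}^{m-1} \hat{f}_0^{(l)}(t)\, \bB\, \hbbf^{(m-l)}(t),
\end{equation*}
and using $\hbbf^{(m)}(0) = 0$, Duhamel's principle yields
\begin{equation*}
\hbbf^{(m)}(t) = -\frac{\imag}{k} \int_0^t \exp\bigl((t-s)\bG_m\bigr) \sum_{l=1}^{m-1} \hat{f}_0^{(l)}(s)\, \bB\, \hbbf^{(m-l)}(s) \dd s.
\end{equation*}
Taking norms, bounding $|\hat{f}_0^{(l)}(s)| \leqslant \|\hbbf^{(l)}(s)\|_2$, and applying Lemma \ref{lem:eig_hatA} together with the inductive hypothesis, each term in the integrand is bounded by
\begin{equation*}
\frac{\|\bB\|_2}{k} C^{(4)} K^{(l)} K^{(m-l)} \bigl((t-s)^M + 1\bigr)(s^M+1)^2 \exp\bigl(-\lambda^{(1)}(t-s)\bigr) \exp(-2\lambda^{(1)} s).
\end{equation*}
Pulling out the factor $\exp(-\lambda^{(1)} t)$ leaves an integral of $((t-s)^M+1)(s^M+1)^2 \exp(-\lambda^{(1)} s)$ from $0$ to $t$. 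Since $\int_0^\infty (s^M+1)^2 \exp(-\lambda^{(1)} s) \dd s$ is a finite constant depending only on $M$ and $\lambda^{(1)}$, and $(t-s)^M + 1 \leqslant 2^M(t^M + 1)$, the integral is bounded by $\mathrm{const} \cdot (t^M + 1)$. Collecting the constants provides the required $K^{(m)}$.

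\textbf{Expected main obstacle.} The proof itself is conceptually routine once Duhamel's principle is set up; the only delicate point is the polynomial bookkeeping. The product of two factors $(s^M + 1)$ coming from the inductive hypotheses produces a polynomial of degree $2M$, not $M$, and one must verify that convolving this with the propagator bound $((t-s)^M + 1)\exp(-\lambda^{(1)}(t-s))$ still returns a bound of the form $(t^M + 1)\exp(-\lambda^{(1)} t)$ rather than a higher-degree polynomial. The key observation that saves this is that the extra polynomial factors multiplying $\exp(-2\lambda^{(1)} s)$ are integrable on $[0,\infty)$, so they contribute only a multiplicative constant and do not raise the degree. Care is also needed to ensure the constants $K^{(m)}$ remain finite at every step of the induction, which they do because the recursion $K^{(m)} \lesssim \sum_{l=1}^{m-1} K^{(l)} K^{(m-l)}$ terminates after finitely many steps for each fixed $m$.
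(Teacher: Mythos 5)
Your proposal is correct and follows essentially the same route as the paper: induction on $m$, with the base case read off from \eqref{eq:exp_G} at $m=1$ and the inductive step via Duhamel's formula, the bound $|\hat{f}_0^{(l)}|\leqslant\|\hbbf^{(l)}\|_2$, and the observation that the extra polynomial factors sit against the integrable weight $\exp(-\lambda^{(1)}s)$ so the convolution only costs a constant. The only cosmetic difference is your bound $(t-s)^M+1\leqslant 2^M(t^M+1)$ where the sharper $(t-s)^M+1\leqslant t^M+1$ (valid for $0\leqslant s\leqslant t$) suffices and is what the paper implicitly uses to produce its constant $C^{(5)}$.
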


\begin{proof}
  Note that the equation \eqref{eq:first_order} is linear. Therefore
  we can take $m=1$ in \eqref{eq:exp_G} to get
  \begin{equation}
    \label{eq:norm_f1}
    \norm{\hbbf^{(1)}(t)} = \norm{\exp(\bG_1 t) \hbbf^{(1)}(0)}
    \leqslant C^{(4)}(t^M + 1)\exp\left(-\lambda^{(1)} t \right).
  \end{equation}
  Thus \eqref{eq:fm_est} is proven for $m = 1$ by taking $K^{(1)} =
  C^{(4)}$.

  For $m \geqslant 2$, we use induction and suppose that the
  inequality
  \begin{equation}
    \label{eq:norm_fj}
    \left\| {\hbbf}^{(j)}(t)\right\|_2 \leqslant {K}^{(j)}
      \left(t^M+1\right) \exp\left(-\lambda^{(1)} t\right), 
  \end{equation}
  holds for all $j < m$. To estimate $\hbbf^{(m)}(t)$, we write the
  explicit solution of \eqref{eq:high_order} as
  \begin{equation}
    \label{eq:sol_sec}
    {\hbbf}^{(m)}(t) = \int_0^t \exp((t-s) \bG_m) \bp_m(s) \dd s,
  \end{equation}
  where we have used the definition
  \begin{equation}
  \bp_m(t) := -\frac{\imag}{k} \sum_{l=1}^{m-1}
    \hat{f}^{(l)}_0(t) {\bB} {\hbbf}^{(m-l)}(t)
  \end{equation}
  for conciseness. Since $\norm{\bB} = \sqrt{M}$, the vector $\bp_m$
  can be bounded by
  \begin{equation}
    \label{eq:second_part1}
    \begin{split}
      \norm{\bp_m(t)} & = \norm{ -\frac{\imag}{k} \sum_{l=1}^{m-1}
        \frac{1}{l}\hat{f}^{(l)}_0(t) {\bB} {\hbbf}^{(m-l)}(t)}
      \leqslant \frac{\sqrt{M}}{k}\sum_{l=1}^{m-1}\frac{1}{l}
      \norm{\hbbf^{(l)}(t)}\norm{\hbbf^{(m-l)}(t)} \\
      & \leqslant \frac{\sqrt{M}}{k}\sum_{l=1}^{m-1}\frac{1}{l}
      K^{(l)}K^{(m-l)} \left(t^M+1\right)^2\exp\left(-2\lambda^{(1)}t\right),
    \end{split}
  \end{equation}
  where the last inequality is an application of the inductive
  hypothesis. Now we can take the norm on both sides of
  \eqref{eq:sol_sec} and plug in the inequalities \eqref{eq:exp_G} and
  \eqref{eq:second_part1} to get
  \begin{equation}
    \label{eq:norm_f2}
    \begin{split}
      \left\| {\hbbf}^{(m)}(t)\right\|_2 & 
        \leqslant \int_0^t \norm{\exp((t-s)\bG_m)} \norm{\bp_m(s)} \dd s \\
      & \leqslant C^{(5)} \sum\limits_{l=1}^{m-1}\frac{1}{l}K^{(l)}K^{(m-l)}
        \left(t^M+1\right) \exp\left(-\lambda^{(1)}t\right),
    \end{split}
  \end{equation}
  where
  \begin{equation}
    C^{(5)} = \frac{\sqrt{M}}{k} C^{(4)} \int_0^\infty
      \left(s^M+1\right)^2\exp\left(-\lambda^{(1)}s\right) \dd s.
  \end{equation}
  The inequality \eqref{eq:norm_f2} shows that \eqref{eq:fm_est} holds
  for
  \begin{equation}
    \label{eq:coe_m}
    K^{(m)} = C^{(5)}\sum\limits_{l=1}^{m-1}\frac{1}{l}K^{(l)}K^{(m-l)}
      = C^{(5)}\sum\limits_{l=0}^{m-2}\frac{1}{l+1} K^{(l+1)}K^{(m-l-1)}.
  \end{equation}
  Therefore for all positive integer $m$, the estimate
  \eqref{eq:fm_est} holds according to the principle of the mathematical
  induction.
\end{proof}

From the above theorem, we see that if the distribution function
\eqref{eq:ce_f} is approximated by a truncation of the series, then
such a finite series converges to a constant as $t \rightarrow
\infty$. To prove such a property for the infinite series
\eqref{eq:ce_f}, we still need to study the magnitude of each
coefficient $K^{(m)}$. The recursion relation \eqref{eq:coe_m} reminds
us of Jonah's theorem introduced in \cite{Hilton1990Ballot}:
\begin{lemma}[Jonah]
If $m_1 \geqslant 2m_2$, then
\begin{equation}
  \label{eq:Jonah}
  \left( \begin{array}{c} m_1+1 \\ m_2 \end{array} \right) =
    \sum_{l=0}^{m_2} \frac{1}{l+1}
      \left( \begin{array}{c} 2l \\ l \end{array} \right)
      \left( \begin{array}{c} m_1-2l \\ m_2-l \end{array} \right).
  \end{equation}
\end{lemma}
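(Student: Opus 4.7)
The plan is to prove \eqref{eq:Jonah} combinatorially, by interpreting both sides as enumerations of lattice paths and exhibiting a first-passage decomposition that matches them term by term. Specifically, consider paths of length $m_1 + 1$ starting at the origin whose steps are either up-steps $(+1,+1)$ or down-steps $(+1,-1)$, with exactly $m_2$ up-steps and $m_1+1-m_2$ down-steps. The right-hand side $\binom{m_1+1}{m_2}$ is immediately the total number of such paths, so the task is to show that the left-hand side counts the same objects.

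Under the hypothesis $m_1 \geqslant 2 m_2$, we have $m_1 + 1 - m_2 > m_2$, so every admissible path contains strictly more down-steps than up-steps and must eventually reach height $-1$. Let $\tau$ be the index of the first step at which height $-1$ is attained. The prefix of length $\tau - 1$ is a nonnegative excursion returning to height $0$, so $\tau - 1 = 2l$ is even and these initial steps form a Dyck path of semilength $l$, which can be chosen in $C_l = \frac{1}{l+1}\binom{2l}{l}$ ways. Step $\tau$ is forced to be a down-step, and the remaining suffix of length $m_1 - 2l$ is an arbitrary sequence of $m_2 - l$ up-steps and $m_1 - m_2 - l$ down-steps, contributing $\binom{m_1 - 2l}{m_2 - l}$ choices. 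Summing over the valid range $0 \leqslant l \leqslant m_2$ reproduces exactly the right-hand side of \eqref{eq:Jonah}.

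The main point to verify is that this decomposition is a genuine bijection: every admissible path produces a unique triple consisting of $l$, a Dyck prefix, and a free suffix; and every such triple reconstructs a unique path. The hypothesis $m_1 \geqslant 2 m_2$ is essential here, since it guarantees that $\tau$ exists for every path in the count; without it, the first-passage argument breaks down and the identity can fail. An equally clean alternative would be to work with generating functions: substituting $y = x/(1+x)^2$ into the Catalan generating function $C(y) = (1-\sqrt{1-4y})/(2y)$ yields $C(y) = 1 + x$ for $x$ in a neighborhood of zero, whence
\begin{equation*}
  \sum_{l \geqslant 0} C_l\, x^l (1+x)^{m_1 - 2l}
    = (1+x)^{m_1}\, C\!\left( \tfrac{x}{(1+x)^2} \right)
    = (1+x)^{m_1+1};
\end{equation*}
extracting the coefficient of $x^{m_2}$ on both sides and using $m_1 \geqslant 2 m_2$ to discard the vanishing terms with $l > m_2$ then yields \eqref{eq:Jonah} directly.
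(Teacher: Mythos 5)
Your argument is correct, but note that the paper does not actually prove this lemma: it is imported verbatim from \cite{Hilton1990Ballot} and used as a black box to unwind the recursion \eqref{eq:coe_m} for the constants $K^{(m)}$. So there is no in-paper proof to compare against; what you have supplied is a self-contained justification of the cited identity, and both of your routes check out. The first-passage decomposition is sound: with $m_2$ up-steps and $m_1+1-m_2$ down-steps the terminal height is $2m_2-m_1-1\leqslant -1$, so the first hitting time $\tau$ of level $-1$ exists; the prefix of length $\tau-1=2l$ is a Dyck path counted by the Catalan number $\frac{1}{l+1}\binom{2l}{l}$, step $\tau$ is forced, and the unconstrained suffix of length $m_1-2l$ contributes $\binom{m_1-2l}{m_2-l}$. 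The hypothesis $m_1\geqslant 2m_2$ is used exactly where you say: it forces $\tau$ to exist for every path, and it gives $m_1-m_2-l\geqslant 0$ for every $l\leqslant m_2$, so the decomposition is a bijection onto the full range of the sum. The generating-function variant is also valid as a formal-power-series computation, since $1-4x/(1+x)^2=\bigl((1-x)/(1+x)\bigr)^2$ yields $C\bigl(x/(1+x)^2\bigr)=1+x$, and the terms with $l>m_2$ contribute nothing to the coefficient of $x^{m_2}$ while the terms with $l\leqslant m_2$ involve only nonnegative exponents $m_1-2l$ thanks to the same hypothesis. One cosmetic slip: in your opening paragraph you call $\binom{m_1+1}{m_2}$ the right-hand side of \eqref{eq:Jonah}, whereas in the displayed identity it is the left-hand side; you use the labels consistently with the display later on, so this is only a wording inconsistency, not a mathematical one.
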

By this lemma, a general formula of $K^{(m)}$ can be explicitly
written, and then it can be properly bounded. The details are listed
in the following proof:

{\renewcommand\proofname{Proof of Theorem \ref{thm:vp}}
\begin{proof}
We first claim that
\begin{equation}
  \label{eq:K}
  K^{(m+1)} = \frac{1}{2^m}
    \left( \begin{array}{c} 2m \\ m \end{array} \right)
    K^{(1)} \left( C^{(5)} K^{(1)} \right)^m.
\end{equation}
Apparently the above equality holds for $m = 0$. If $m > 0$, we just
need to verify that \eqref{eq:K} fulfills \eqref{eq:coe_m}. By
inserting \eqref{eq:K} into \eqref{eq:coe_m} and cancelling out some
constants on both sides, we obtain
\begin{equation}
  \label{eq:rec}
  \frac{1}{2} \left( \begin{array}{c} 2m \\ m \end{array} \right) =
    \sum_{l=0}^{m-1} \frac{1}{l+1}
      \left( \begin{array}{c} 2l \\ l \end{array} \right)
      \left( \begin{array}{c} 2(m-l-1) \\ m-l-1 \end{array} \right).
\end{equation}
To verify this equality, we apply Jonah's theorem \eqref{eq:Jonah} and
let $m_1 = 2(m-1)$, $m_2 = m-1$. Thus the right hand side of
\eqref{eq:rec} matches the right hand side of \eqref{eq:Jonah}. The
left hand sides are also equal since
\begin{equation}
  \frac{1}{2} \left( \begin{array}{c} 2m \\ m \end{array} \right) =
    \frac{1}{2} \frac{(2m)!}{m!m!} =
    \frac{1}{2} \frac{2m}{m} \frac{(2m-1)!}{m!(m-1)!} =
    \left( \begin{array}{c} 2m-1 \\ m-1 \end{array} \right) =
    \left( \begin{array}{c} m_1+1 \\ m_2 \end{array} \right).
\end{equation}

Based on \eqref{eq:K}, it is easy to bound $K^{(m)}$ by
\begin{equation}
  K^{(m+1)} = 2^m \frac{(2m-1)!!}{(2m)!!} K^{(1)}
    \left( C^{(5)} K^{(1)} \right)^m \leqslant
      K^{(1)} \left( 2 C^{(5)} K^{(1)} \right)^m.
\end{equation}
Now we can use \eqref{eq:ce_f} to get
\begin{equation}
  \begin{split}
  \left\| \bbf(\cdot, t) - \hbbf^{(0)} \right\|_2 & \leqslant
    \sqrt{D} \sum_{m=0}^{+\infty} \epsilon^{m+1}
      \left\| \hbbf^{(m+1)} \right\|_2 \leqslant
    \sqrt{D} \sum_{m=0}^{+\infty} \epsilon^{m+1}
      K^{(m+1)} (t^M + 1) \exp(-\lambda^{(1)}t) \\
  & \leqslant \epsilon D K^{(1)}
    \sum_{m=0}^{+\infty} \left( 2\epsilon C^{(5)} K^{(1)} \right)^m
      (t^M + 1) \exp(-\lambda^{(1)} t).
  \end{split}
\end{equation}
Now it is clear that when $\epsilon < \left[ 2 C^{(5)} K^{(1)}
\right]^{-1}$, the inequality \eqref{eq:sol_vp} holds for
\begin{equation}
  C^{(3)} = \frac{\epsilon D K^{(1)}}{1 - 2\epsilon C^{(5)} K^{(1)}}.
  \qedhere
\end{equation}
\end{proof}}


\subsection{Analysis on the damping rate of the electric energy}
In this section, we will analyze how the filter affects the damping
rate and the oscillation frequency of the electric energy in the
problem of Landau damping. Below we will first review the analysis on
the original VP equation, and then the filtered equation will be
discussed.

\subsubsection{Analysis on the VP equation}
In the linear Landau damping problem, the plasma wave has very small
amplitude. Thus we can assume that
\begin{equation}
  \label{eq:linearize_dis}
  f(x,\xi,t) = f^{(0)}(\xi) + \epsilon f^{(1)}(x,\xi,t),
\end{equation}
where $f^{(0)}(\xi)$ is the background distribution, which is uniform
in $x$, and $f^{(1)}(x,\xi)$ has the order of magnitude $O(1)$.
Inserting this equation into the VP equation and ignoring the terms
quadratic in $\epsilon$, we get the following linearized VP equation:
\begin{equation}
  \label{eq:linearize_vlasov}
  \pd{f^{(1)}}{t} + \xi\pd{f^{(1)}}{x} + E^{(1)}\pd{f^{(0)}}{\xi} = 0,
    \qquad \pd{E^{(1)}}{x} = \int_{\bbR} f^{(1)} \dd \xi.
\end{equation}
If we further assume that the plasma wave being considered takes the
form of a plane wave traveling in the $x$-direction:
\begin{equation}
  f^{(1)}(x, t, \xi) =  \hat{f}^{(1)}(\xi)\exp(-\imag\omega t+\imag k x),
  \qquad E^{(1)}(x,t) = \hat{E}^{(1)} \exp(-\imag \omega t + \imag k x),
\end{equation}
we obtain the dispersion relation
\begin{equation}
  \label{eq:dispersion_vlasov}
  \frac{1}{k^2}\int_{\bbR}\frac{1}{\xi - \omega / k}
    \pd{f^{(0)}}{\xi} \dd \xi = 1.
\end{equation}
Let $\omega = \omega_{p} + \imag \gamma$ be the solution to
\eqref{eq:dispersion_vlasov}. Then $\gamma$ and $\omega_p$ are
respectively the damping rate and the oscillation frequency of the
electric energy. We refer the readers to
\cite{Goldston2016Introduction} for the more details on the dispersion
relation.

\subsubsection{Analysis on the filtered VP equation}
In order to take into account the filters, we assume that the filter
\eqref{eq:sigma_M} is used and consider the ``modified equation''
\eqref{eq:filtered_vlasov}, so that a similar analysis can be carried
out. Here we also assume that the distribution function is a small
perturbation of the background distribution \eqref{eq:linearize_dis},
and the linearized VP equation with filter can be written as
\begin{equation}
  \label{eq:linear_vlasov}
  \pd{f^{(1)}}{t} + \xi \pd{f^{(1)}}{x}
    + E^{(1)} \pd{f^{(0)}}{\xi} = \mH f^{(1)}, \qquad
  \pd{E^{(1)}}{x} = \int_{\bbR} f^{(1)} \dd \xi,
\end{equation}
where the operator $\mathcal{H}$ is defined by $\mH = (-1)^{p+1}
\alpha (\Delta t \, M^p)^{-1} \mathcal{D}^p$, and we have assumed $\mH
f^{(0)} = 0$, which holds for the initial condition
\eqref{eq:initial_vlasov}. Let
\begin{equation} \label{eq:def_g}
  g(x,\xi,t) = \exp(-t\mH)f^{(1)}(x,\xi,t).
\end{equation}
Then the function $g$ formally satisfies
\begin{equation}
  \label{eq:g}
  \exp(t\mH) \pd{g}{t} =
    -\left(\xi \exp(t\mH)\pd{g}{x} + E^{(1)} \pd{f^{(0)}}{\xi}\right),
  \qquad \pd{E^{(1)}}{x} =
    \int_{\bbR} \exp(t \mH) g \dd \xi.
\end{equation}
Now we assume that the equation \eqref{eq:g} has a plane-wave solution:
\begin{equation}
  \label{eq:fourier_trans}
  g(x,\xi,t) = \hat{g}(\xi) \exp(-\imag \omega t + \imag k x),
  \qquad E^{(1)}(x, t) = \hat{E}^{(1)} \exp(-\imag \omega t + \imag k x),
\end{equation}
which changes \eqref{eq:g} into
\begin{equation}
  \label{eq:dis_f1}
  \imag (\xi k - \omega) \exp(t \mH) \hat{g}
    = -\hat{E}^{(1)} \pd{f^{(0)}}{\xi}, \qquad
  \imag k \hat{E}^{(1)} = \int_{\bbR} \exp(t \mH) \hat{g} \dd \xi.
\end{equation}
By dividing the first equation by $\xi k - \omega$ and integrating
with respect to $\xi$, the amplitude of the electric field
$\hat{E}^{(1)}$ can be cancelled out and we get the result
\begin{equation}
k = \int_{\bbR} \frac{1}{\xi k - \omega} \pd{f^{(0)}}{\xi} \dd \xi,
\end{equation}
which turns out to be exactly the same as the result of the original
VP equation \eqref{eq:dispersion_vlasov}. Such analysis shows that the
modified equation and the original VP equation have the same damping
rate and oscillation frequency for the electric field. Therefore
adding filters does not ruin the major behavior of the linear Laudau
damping.

\begin{remark}
The above analysis is only formally correct, since in
\eqref{eq:def_g}, the operator $\exp(-t \mH)$ is generally an
unbounded operator. Therefore it still remains to prove that $f^{(1)}$
lies in the domain of this operator for all $x$ and $t$, which is not
yet done. Currently we are not too concerned about this point since
the main idea of this section is to provide a clue for the reliability
of filters.
\end{remark}


\section{Numerical Results}
\label{sec:num}

In order to verify the theoretical results and show how the recurrence
is suppressed by the filter, the numerical experiments for both the
advection equation and the Vlasov equation are carried out. In the
literature, the recurrence for the Landau damping problem is usually
observed from the evolution of the self-consistent electric energy
$\mE(t)$ defined in \eqref{eq:electric_energy}. Therefore in all the
numerical tests, we are going to use the same quantity
\eqref{eq:electric_energy} to show the effect of the filter. Note that
for the advection equations considered in Section \ref{sec:con},
we do not have $E(x,t) = E_{\mathrm{sc}}(x,t)$ as in the Vlasov
equation, and thus the quantity $\mE(t)$ is merely a functional of the
distribution function $f(x,\xi,t)$ defined by \eqref{eq:normal_force}
\eqref{eq:electric_energy}, and does not appear explicitly in the
transport equation.

The filter we adopt in our numerical tests is the one proposed in
\cite{HouLi2007}, which is identical to the filter defined in
\eqref{eq:sigma_M} with $\alpha = 36$ and $p = 36$. The corresponding
filter matrix $\bH$ can be written as \eqref{eq:h} with
\begin{equation}
  h_i = -36 \Delta t^{-1}(i/M)^{36}, \quad i = 0, \cdots, M.
\end{equation}
As mentioned in Section \ref{sec:Hermite}, the time step $\Delta t$ is
set as $\Delta t = C / \sqrt{M}$, and the constant $C$ is chosen as
$0.5$ in all the tests. Since it is much easier to obtain
$\mE(t)$ from the Fourier coefficients $\hbbf^{(m)}$, we solve
\eqref{eq:fourier_system_vlasov} instead of the original transport
equation \eqref{eq:vlasov} in all our experiments.

\subsection{Advection equation}
In this section, we focus on the advection equation
\eqref{eq:transport} with initial condition \eqref{eq:init}. This
initial condition gives the following Fourier coefficients:
\begin{equation*}
  \hbbf^{(0)}(0) = (1, 0, \cdots, 0)^T, \quad
  \hbbf^{(-1)}(0) = \hbbf^{(1)}(0)= (\epsilon/2, 0, \cdots, 0)^T, \quad
  \hbbf^{(m)}(t) = 0, \quad |m| \geqslant 2.
\end{equation*}
In this case, the equations \eqref{eq:fourier_system_te} shows that
each $\hbbf^{(m)}$ can be solved independently, and when $|m|
\geqslant 2$, the initial condition shows that $\hbbf^{(m)}(t) \equiv
0$. Consequently, the function $\mE(t)$ has a simple form
\begin{equation}
  \label{eq:Ef_norm1}
  \mE(t) =\sqrt{\frac{D}{k^2} \left(
    \left|\hat{f}_0^{(-1)}(t)\right|^2 +
    \left|\hat{f}_0^{(1)}(t)\right|^2
  \right)}.
\end{equation}
The exact solution of $\mE(t)$ can be directly obtained from
\eqref{eq:exact} as
\begin{equation}
  \label{eq:analy_E}
  \mE(t) = \frac{\epsilon}{k} \sqrt{\frac{D}{2}} \exp(-k^2 t^2/2). 
\end{equation}
To obtain $\mE(t)$ with the spectral method, we just need to apply
\eqref{eq:fourier_system_sol2} for $m = \pm 1$, which requires
diagonalization of matrices $\bA_{\pm 1}$.

In this example, we set $D = 4 \pi$ and $k = 2\pi / D = 1/2$. We
first verify that all the eigenvalues of $\bA_{\pm 1}$ have negative
real parts. Since $\bA_1 = \bA_{-1}^*$, the eigenvalues of $\bA_1$ are
the complex conjugates of those of $\bA_{-1}$. Therefore it is
sufficient to just look at $\bA_1$. The distribution of the
eigenvalues of $\bA_1$ on the complex plane is plotted in Figure
\ref{fig:ex1_eig}, and the eigenvalues of $\imag k \bA$ (the case
without filter) are also given as a reference. As predicted, all the
eigenvalues of $\imag k \bA$ locate exactly on the imaginary axis,
while all the eigenvalues of $\bA_1$ have strictly negative real
parts.

\begin{figure}[!ht]
  \centering
  \subfigure[$M=30$]{
    \begin{overpic}[height=5cm]{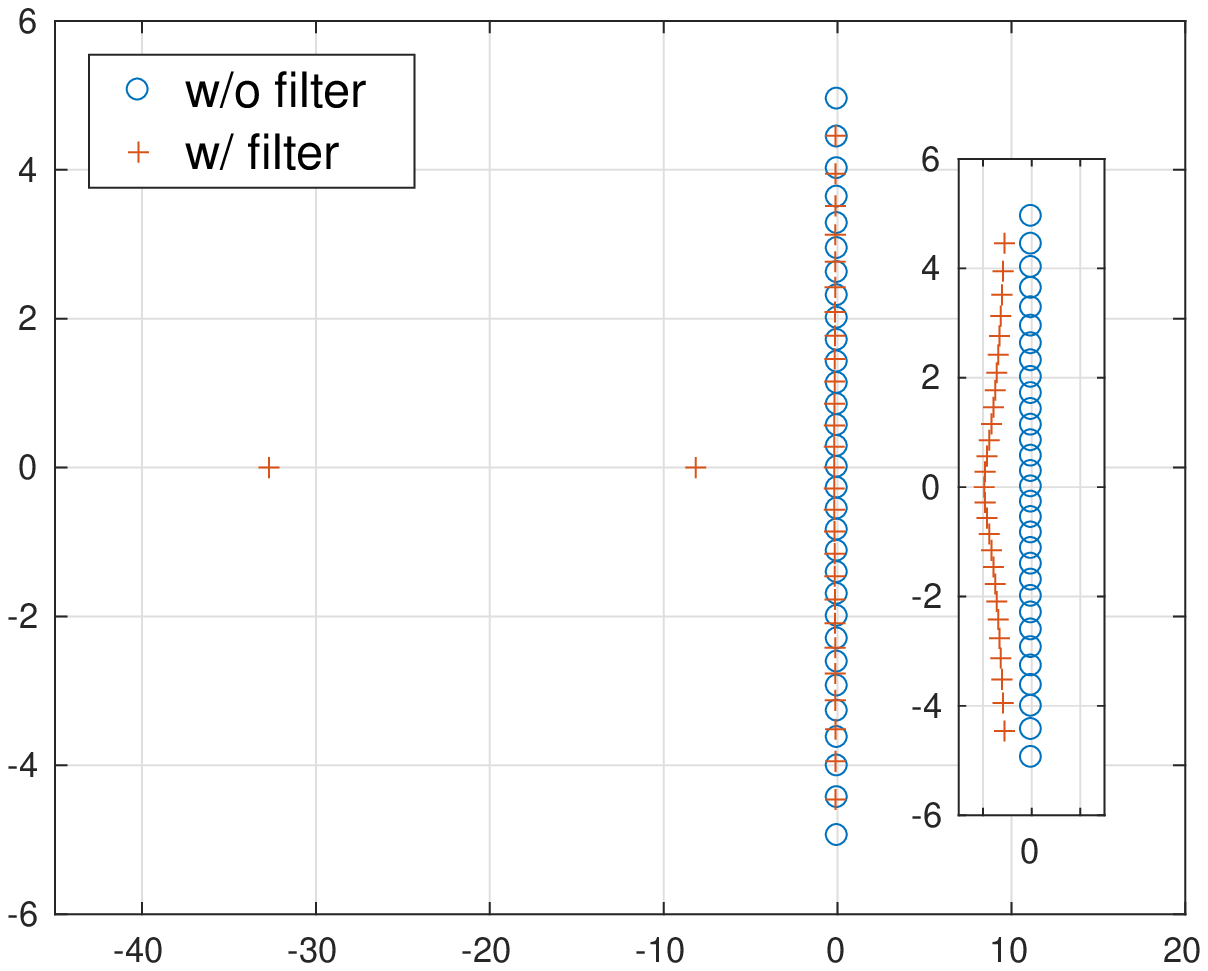}
    \end{overpic}
  }\quad
  \subfigure[$M=60$]{
    \begin{overpic}[height=5cm,clip]{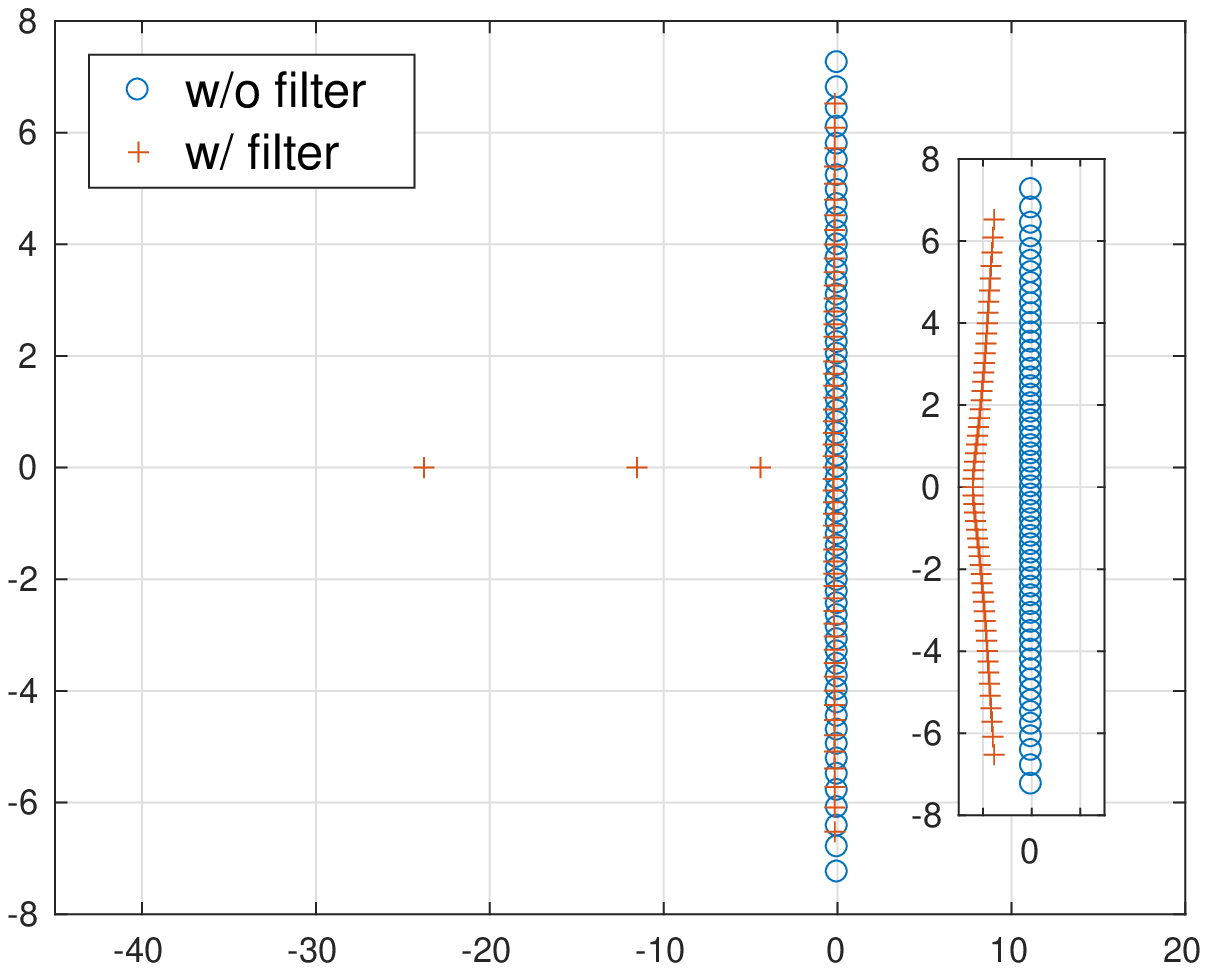}
    \end{overpic}
  }
  \caption{The eigenvalues of $\imag k \bA$ and $\bA_1$ with
    $M=30$ and $60$. The circles are the eigenvalues of
    $\imag k \bA$ and the plus signs are the eigenvalues of $\bA_1$.}
  \label{fig:ex1_eig}
\end{figure}

The time evolutions of $\mE$ and its logarithm are plotted
respectively in Figure \ref{fig:ex1_sol} and \ref{fig:ex1_logsol}.
Initially, both numerical solutions with and without the filter decay
in the same way as the exact solution. When the evolution reaches the
first ``critical'' point, almost simultaneously, both solutions
leave the exact solution and exhibit a fast growth. For the case
without filter, the value of $\mE(t)$ bounces almost back to its
initial value before the next decay, and such a process repeats
periodically, which implies a constantly appearing recurrence. In
general, larger number of moments leads to a longer recurrence time,
which can also be observed by comparing the results with $M = 30$ and
$M = 60$. When the filter is applied, the solutions behave similarly
except that the peak values get smaller as $t$ gets larger, due to the
damping effect of the filter. It can also be observed that the damping
rate for $M = 60$ is higher than that for $M = 30$, which indicates
that for the same filter, systems with more moments may have better
suppression of recurrence. We comment here that the recurrence is not
completely eliminated since $\mE(t)$ is still not monotonically
decreasing.  Nevertheless, from Figure \ref{fig:ex1_sol}, one sees
that the difference between the numerical solution and the analytical
solution is negligible when $t$ is large.

\begin{figure}[!ht]
  \centering
  \subfigure[$M=30$]{
    \begin{overpic}[height=5cm]{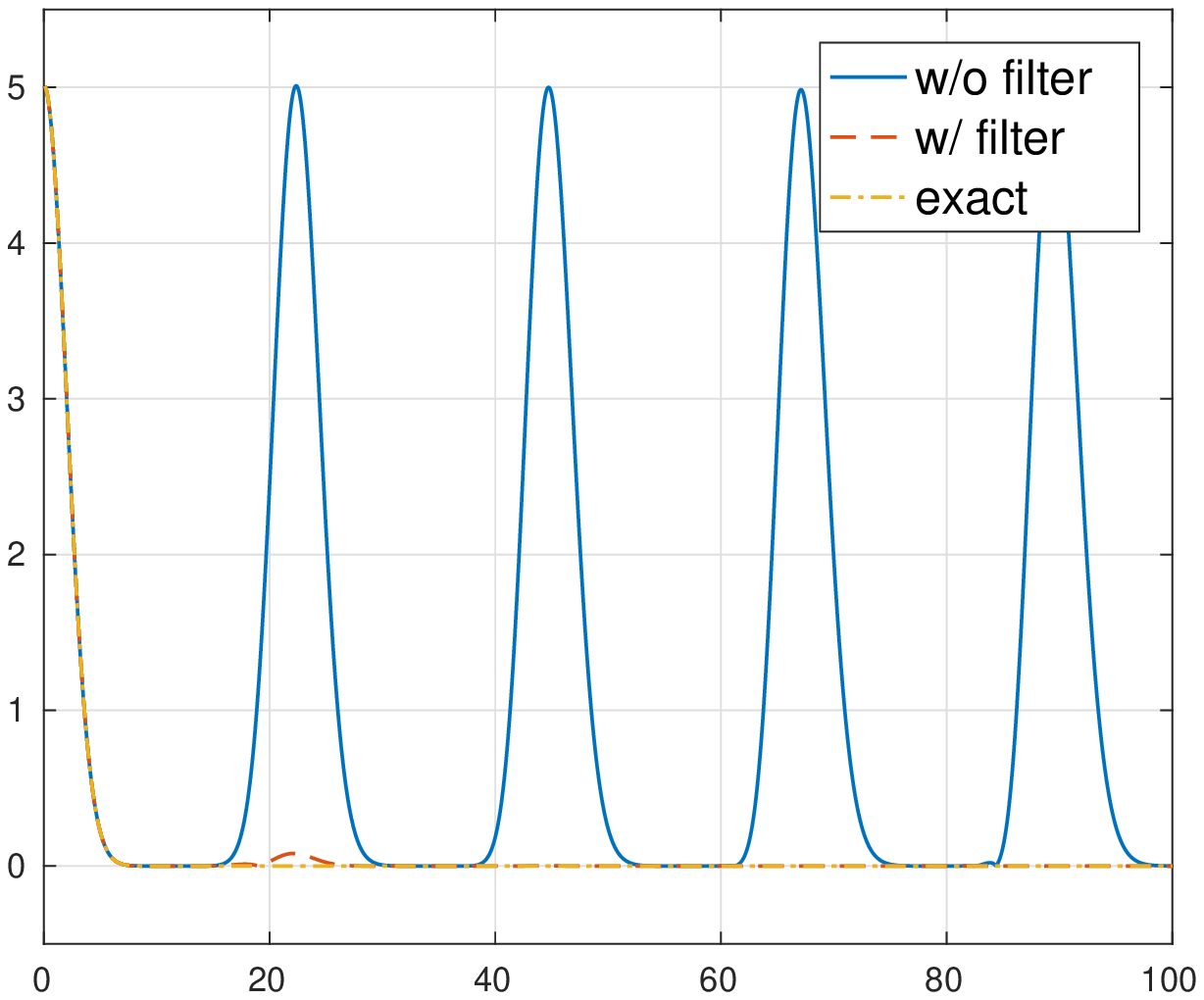}
    \end{overpic}
  } \quad
  \subfigure[$M=60$]{
    \begin{overpic}[height=5cm]{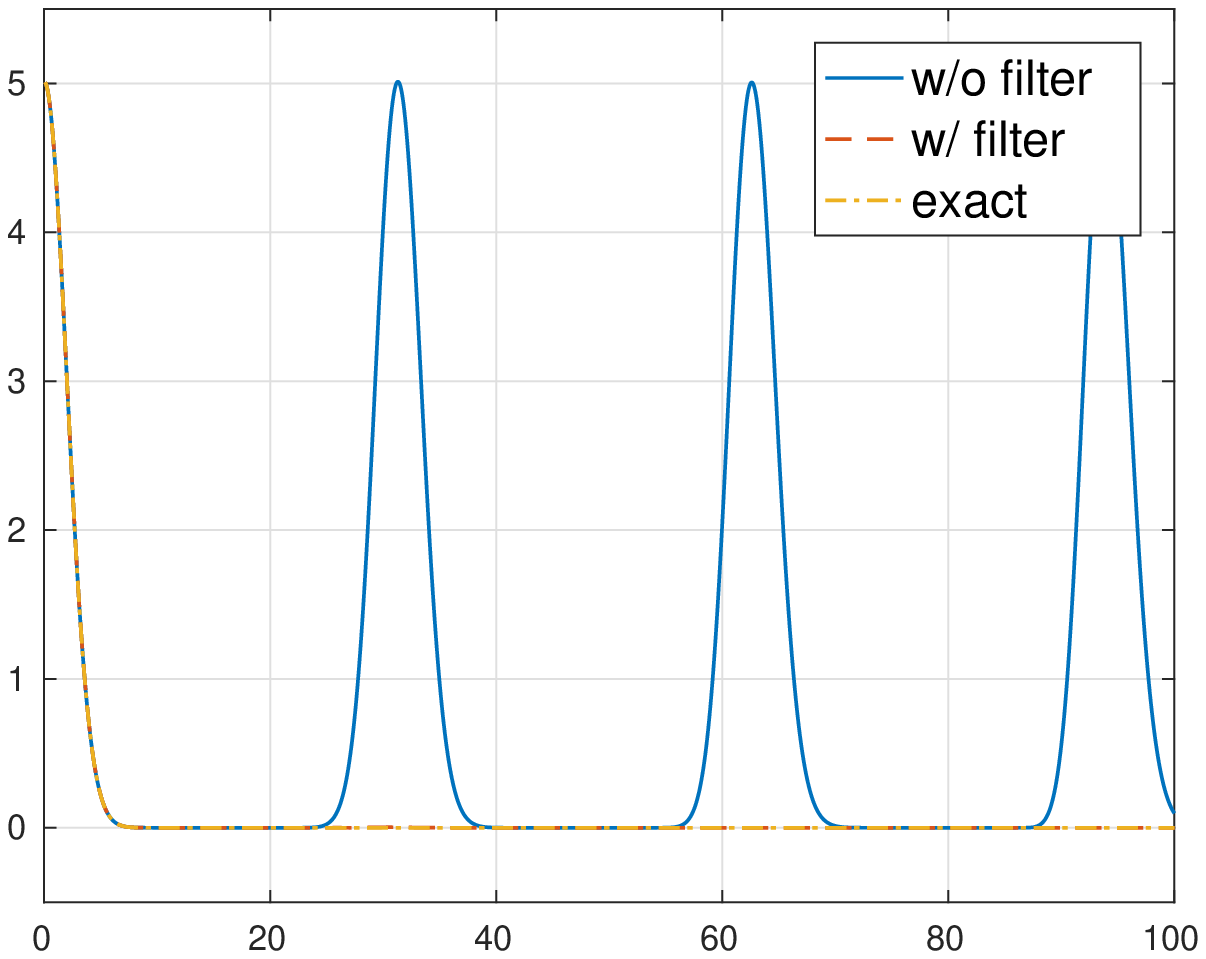}
    \end{overpic}
  }
  \caption{The time evolution of $\mE(t)/\epsilon$ with $M=30$ and $60$.}
  \label{fig:ex1_sol}
\end{figure}

\begin{figure}[!ht]
  \centering
  \subfigure[$M=30$]{
    \begin{overpic}[height=5cm]{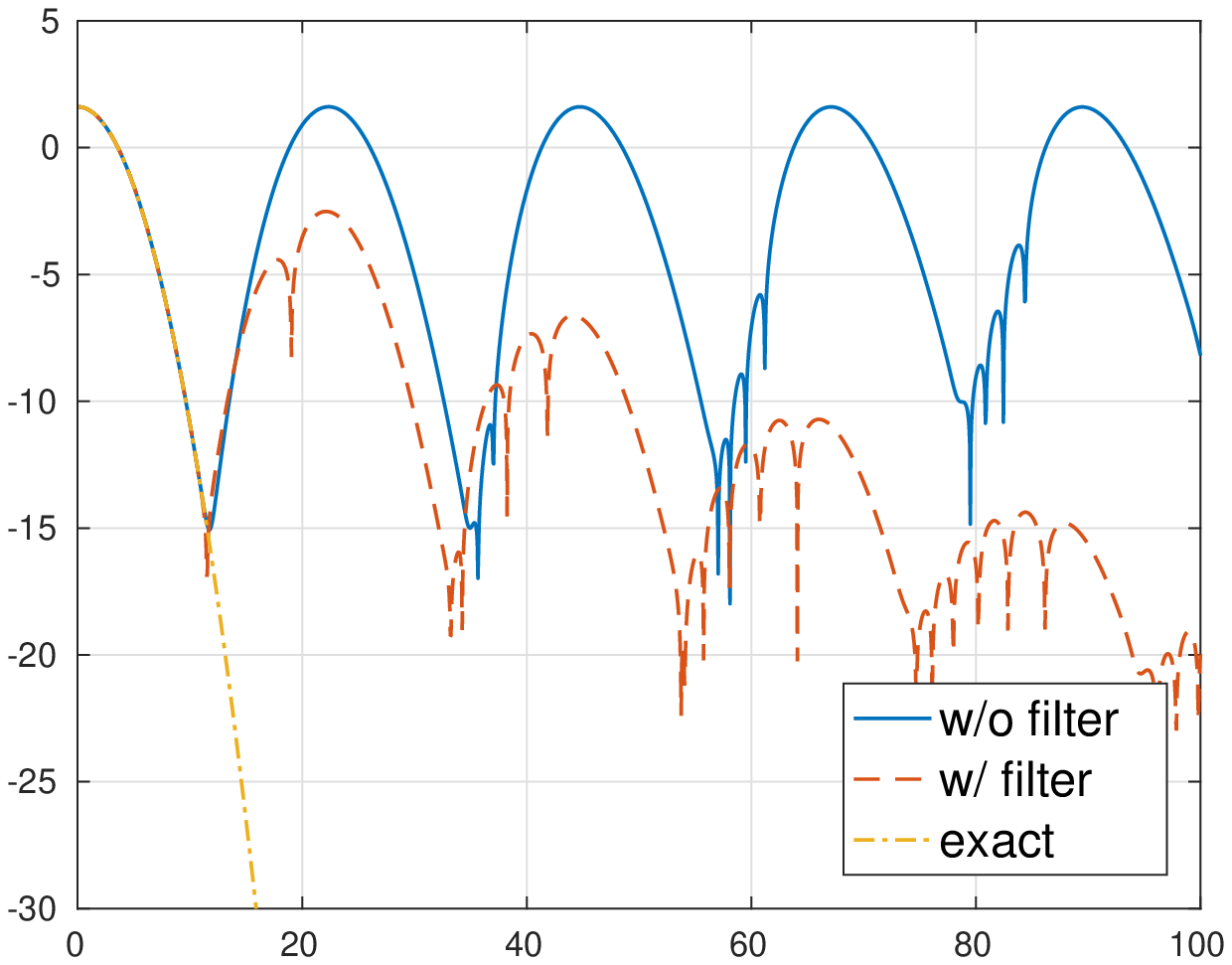}
    \end{overpic}
  } \quad
  \subfigure[$M=60$]{
    \begin{overpic}[height=5cm]{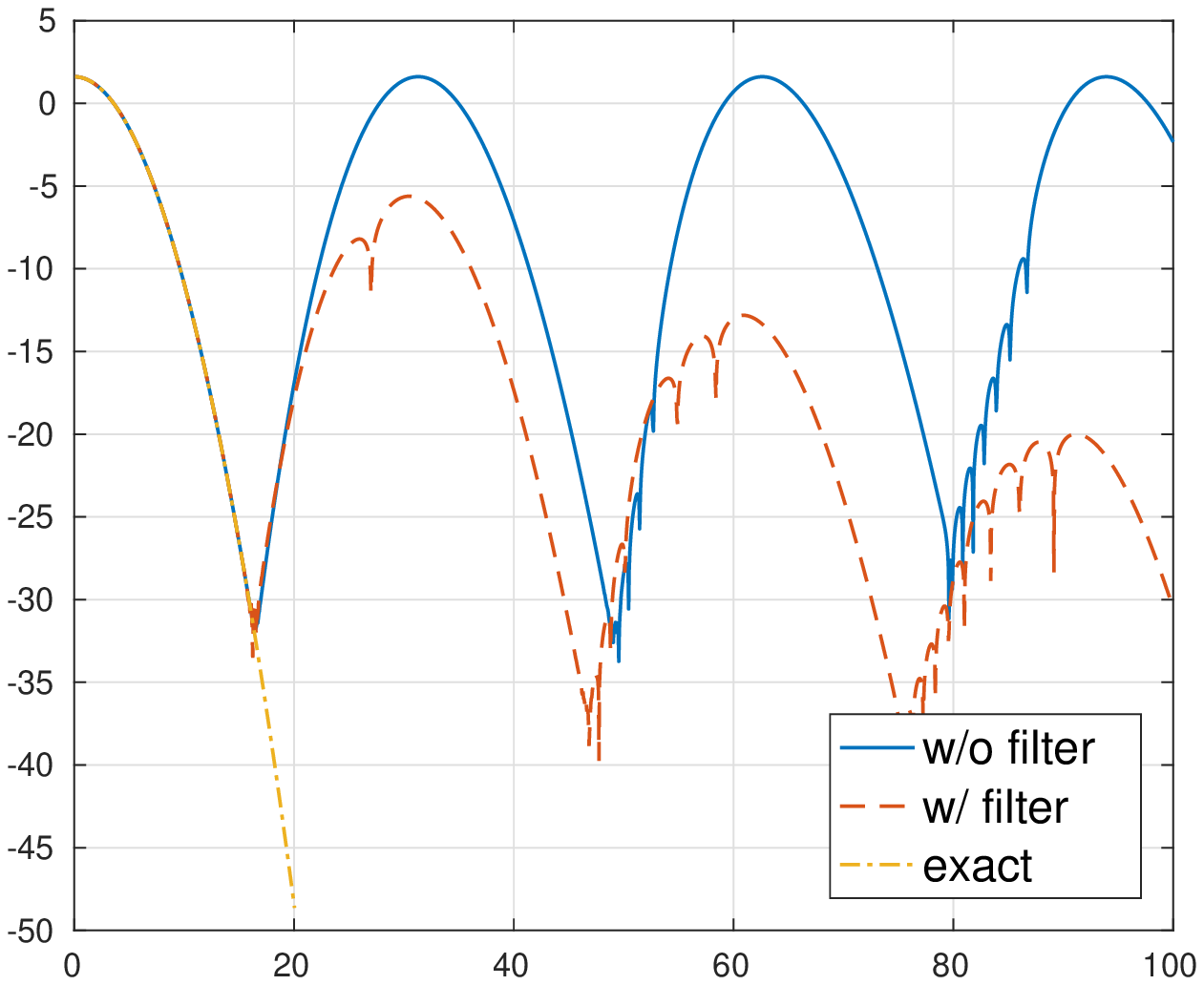}
    \end{overpic}
  } \quad 
  \caption{The time evolution of $\log (\mE(t)/\epsilon)$ with $M=30$ and $60$.}
  \label{fig:ex1_logsol}
\end{figure}

\subsection{Advection equation with given force field}
\label{sec:num_force}

\begin{figure}[!ht]
  \centering
  \psfrag{w/o filter m === 3}{\tiny{w/o filter $m_c=3$}}
  \psfrag{w/o filter m = 5}{\tiny{w/o filter $m_c=5$}}
  \psfrag{w/ filter m = 3}{\tiny{w/ filter $m_c=3$}}
  \psfrag{w/ filter m = 5}{\tiny{w/ filter $m_c=5$}}
  \subfigure[$M=30$]{
    \begin{overpic}[height=5cm]{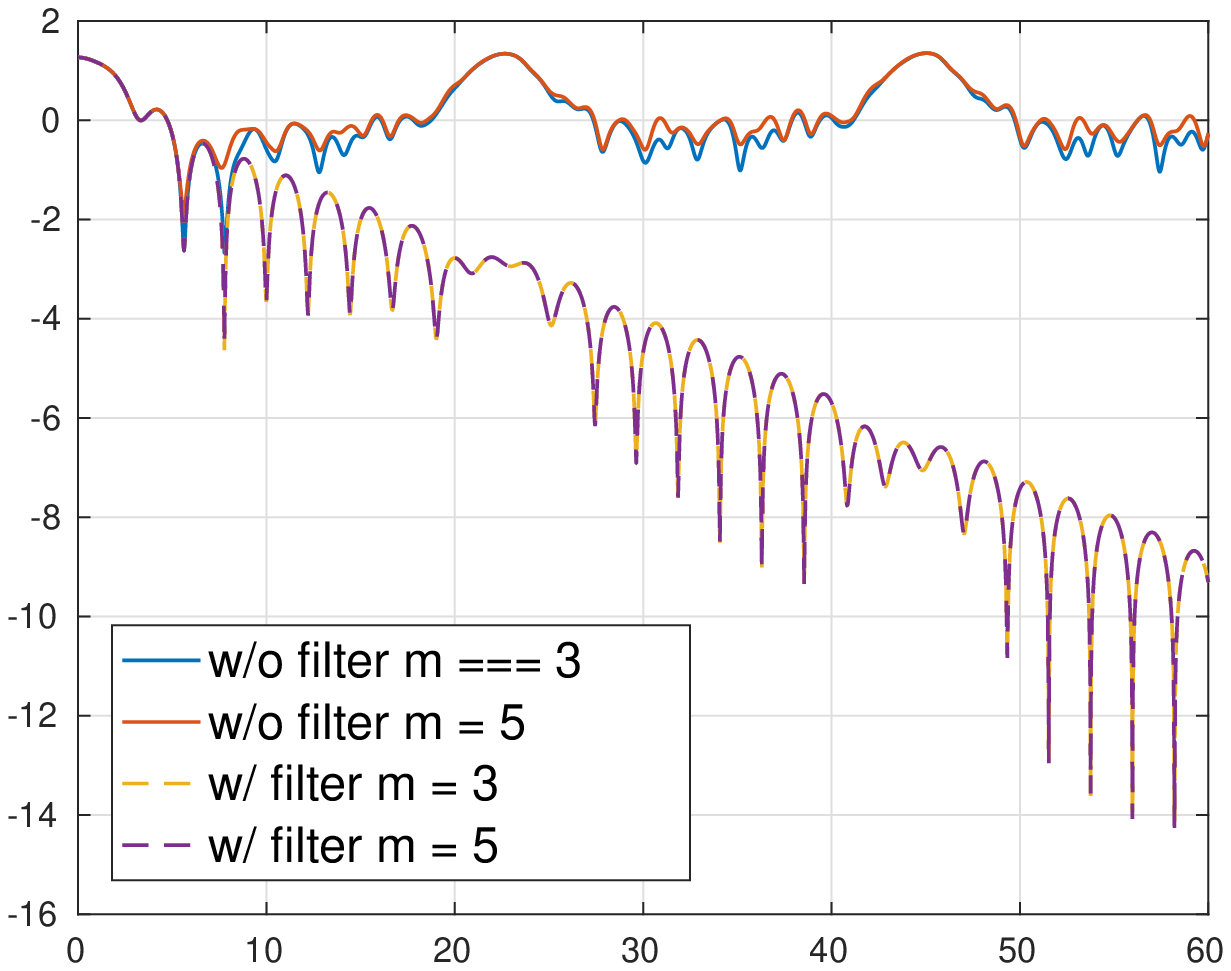}
    \end{overpic}
  } \quad \subfigure[$M=60$]{
    \begin{overpic}[height=5cm]{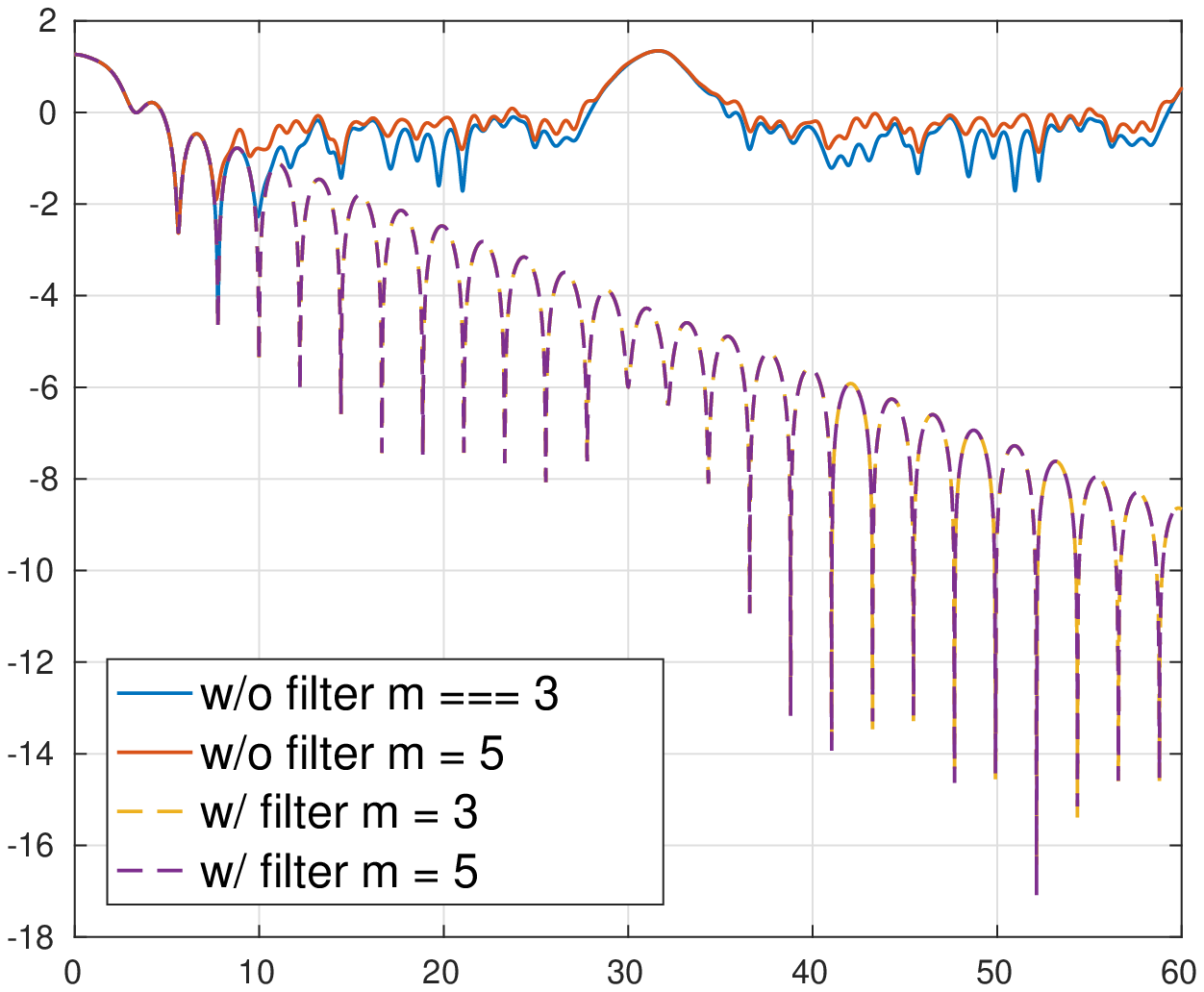}
    \end{overpic}
  }
  \caption{The evolution of $\log \mE(t)$ for the example in
    Section \ref{sec:num_force}.}
  \label{fig:ex2_logsol}
\end{figure}

\begin{figure}[!ht]
  \centering
  \psfrag{w/o filter m = 3}{\tiny{w/o filter $m_c=3$}}
  \psfrag{w/o filter m = 5}{\tiny{w/o filter $m_c=5$}}
  \psfrag{w/ filter m = 3}{\tiny{w/ filter $m_c=3$}}
  \psfrag{w/ filter m = 5}{\tiny{w/ filter $m_c=5$}}
  \psfrag{slope========}{\tiny{$y =-\lambda^{(1)}t$}}
  \subfigure[$M=30$]{
    \begin{overpic}[height=5cm]{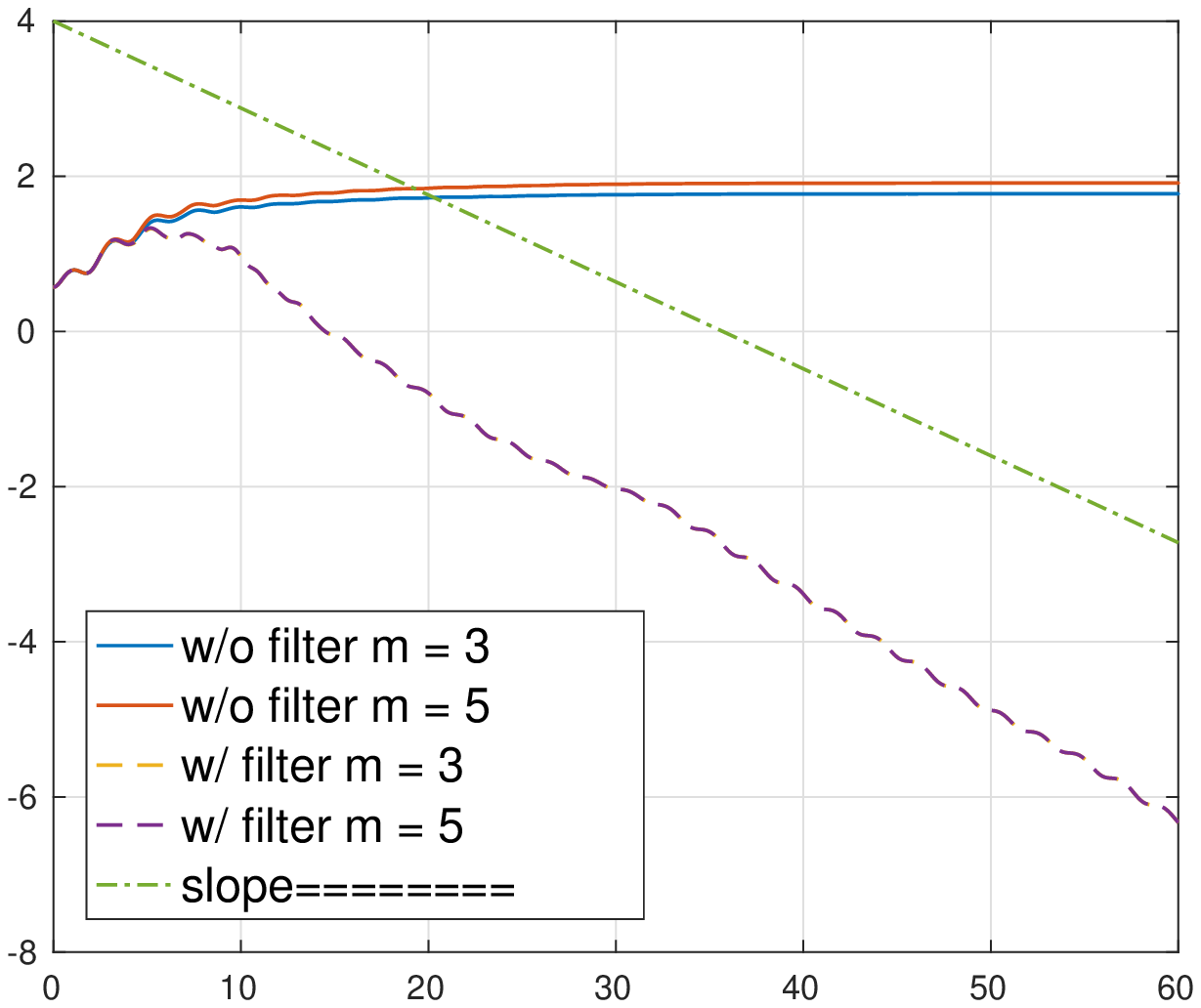}
    \end{overpic}
  } \quad \subfigure[$M=60$]{
    \begin{overpic}[height=5cm]{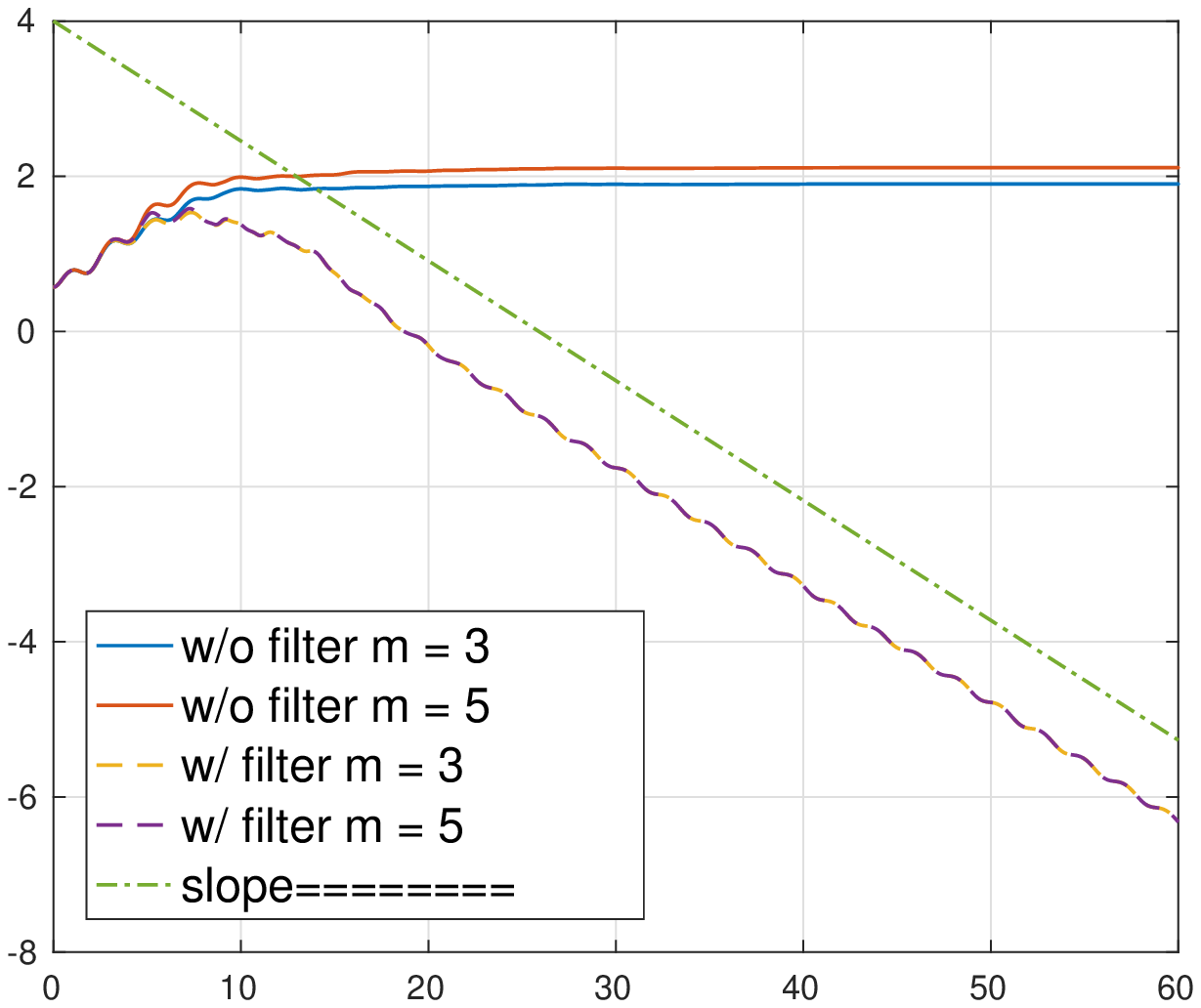}
    \end{overpic}
  }
  \caption{The evolution of $\log(\|\hbbf - \hbbf^{(0)}\|_2)$
    for the example in Section \ref{sec:num_force}.}
  \label{fig:ex2_loghbbf}
\end{figure}

\begin{figure}[!ht]
  \centering
  \psfrag{m = 1}{\tiny{$\log\|\hbbf^{(1)}\|_2$}}
  \psfrag{m = 2}{\tiny{$\log\|\hbbf^{(2)}\|_2$}}
  \psfrag{m = 3}{\tiny{$\log\|\hbbf^{(3)}\|_2$}}
  \psfrag{m = 4}{\tiny{$\log\|\hbbf^{(4)}\|_2$}}
  \psfrag{m = 5}{\tiny{$\log\|\hbbf^{(5)}\|_2$}}
  \psfrag{slope===}{\tiny{$y=-\lambda^{(1)}t$}}
  \subfigure[$M=30, m_c = 5$]{
    \begin{overpic}[height=5cm]{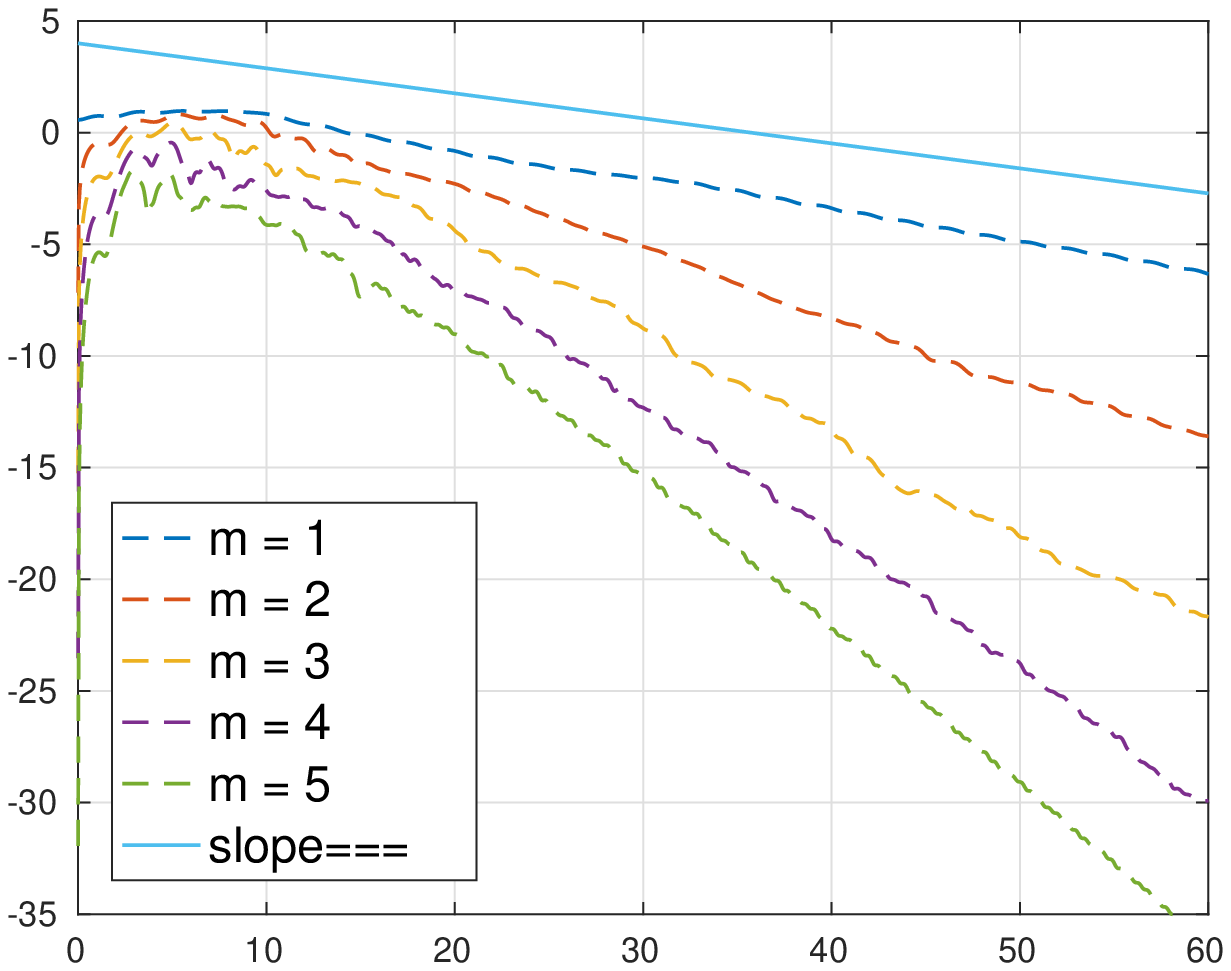}
    \end{overpic}
  }
  \quad 
  \subfigure[$M=60, m_c = 5$]{
    \begin{overpic}[height=5cm]{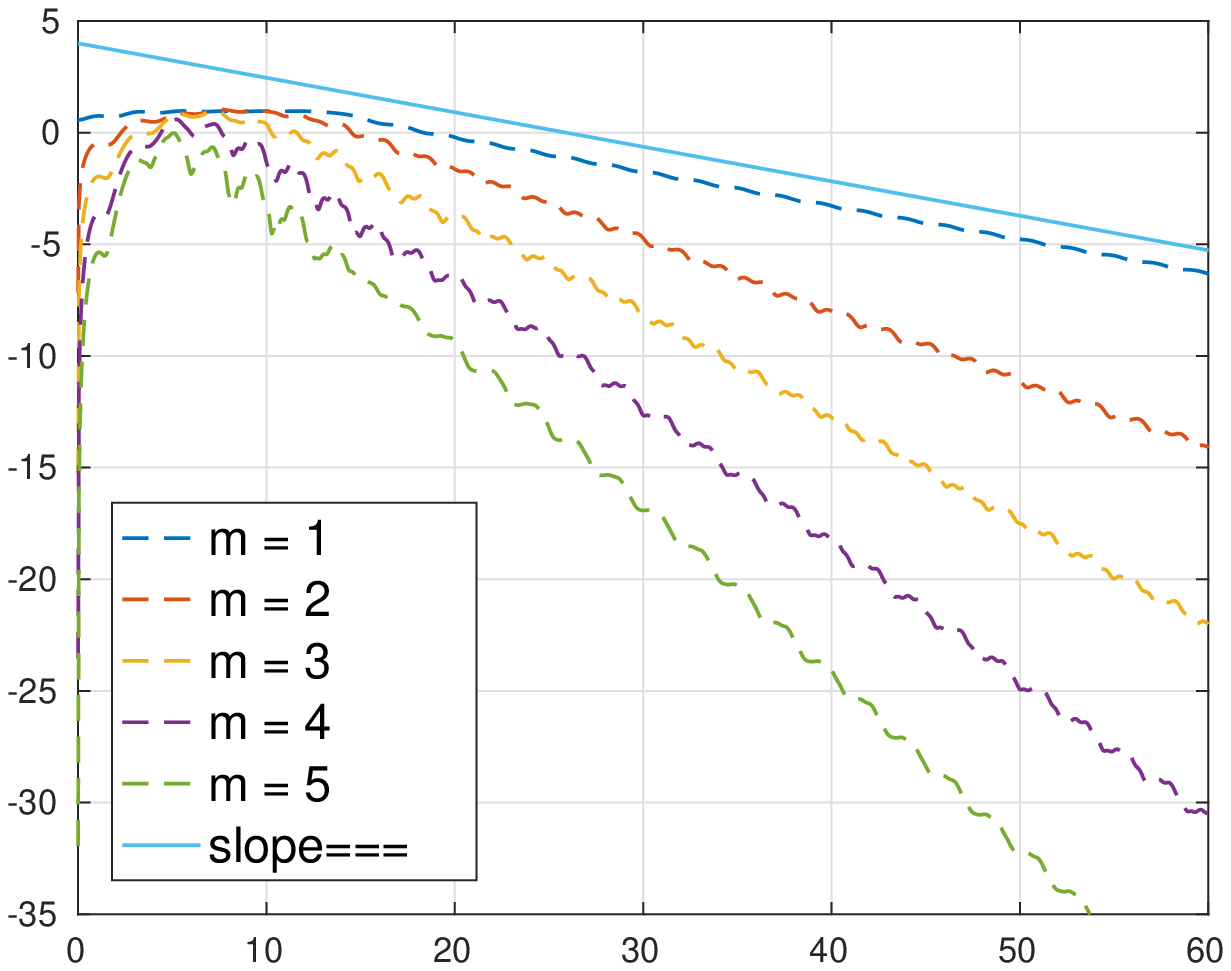}
    \end{overpic}
  }
  \caption{The evolution of $\log(\|\hbbf^{(m)}\|_2)$ for
    the example in Section \ref{sec:num_force}.}
  \label{fig:ex2_loghbbf_m}
\end{figure}

In this section, some numerical experiments for the transport equation
with a given force field \eqref{eq:electric_force} are performed. Here
we choose $\alpha(t)$ and $w(t)$ respectively as
\begin{equation}
  \alpha(t) = \gamma t, \qquad
  w(t) = \epsilon \cos(\omega t) \exp(\imag k x),
\end{equation}
so that it mimics the Landau damping process. The period and the wave
number are again chosen as $D = 4\pi$, $k = 0.5$. For Laudau damping,
these parameters give the decay rate $\gamma = 0.15336$ and the
plasma oscillation frequency $\omega = 1.416$ (see Section
\ref{sec:lld} for details), which are also adopted in definition of
$w(t)$. The initial condition is given by \eqref{eq:initial_vlasov}.
To see the contribution from waves with higher frequencies, we set
$\epsilon$ to be $0.9$. In order to carry out the numerical
simulation, only Fourier modes with $|m| \leqslant m_c$ are taken into
account.

Two numbers of moments $M = 30$ and $M = 60$ are simulated with $m_c =
3$ and $m_c = 5$. The time evolution of the logarithm of $\mE$ is
plotted in Figure \ref{fig:ex2_logsol}, which also shows clearly how
the filter improves the solution. Figure \ref{fig:ex2_loghbbf} also
shows such an effect from another point of view, where all the
non-constant modes are taken into account. For the case with filters,
the decay of each mode is given in Figure \ref{fig:ex2_loghbbf_m}. A
reference decay rate given by the largest real part of the eigenvalues
of $\bA_m$ is also provided as a reference. It is seen that the decay
rates of all the Fourier modes are controlled by this reference line.

\subsection{Linear Landau damping} \label{sec:lld}
The numerical effect of the filter in the simulation of linear Landau
damping has been studied in a number of previous works
\cite{parker2015fourier,pezzi2016collisional}. Here some simple
results are presented for our particular discretization
\eqref{eq:hermit_expansion}. The parameters are chosen as
$\epsilon = 0.001$, $D = 4 \pi$ and $ k = 1/2$. Since $\epsilon$ is
quite small, it is sufficient to consider the leading order term
$\epsilon \hat{E}^{(1)}(t) \exp(\imag k x)$ in the expansion
\eqref{eq:ce_E}.

The evolution of $\log \mE(t)$ with $M = 90$ and $M = 120$ is plotted
in Figure \ref{fig:ex3_logsol}, where the solutions with and without
filters are both given. We can find that the two solutions are almost
the same before the recurrence occurs. Similar to in the last
subsection, after the recurrence time, the results without filters
become unreliable, while the results with filters still decay
exponentially. To be precise, the numerical decay rate is obtained by
least-square fitting of peak value points before time $t_F$. Two
different values of $t_F$ (respectively before and after recurrence)
are used to fit the slope, and the results are compared with the
theoretical decay rate $\gamma = 0.15336$ (see e.g. \cite{Eric}).
Particularly, the second $t_F$ is chosen around twice the recurrence
time. Table \ref{tab:slope} shows that before recurrence, the decay
rates of both methods match very well with the theoretical result, and
after the recurrence time, an accurate decay rate can still be kept
for a long time by filtering. In this example, the filter improves the
numerical result dramatically, and such a method looks very promising
in the numerical simulation of plasmas.

\begin{figure}[!ht]
  \centering
  \subfigure[$M=90$]{
    \begin{overpic}[height=5cm]{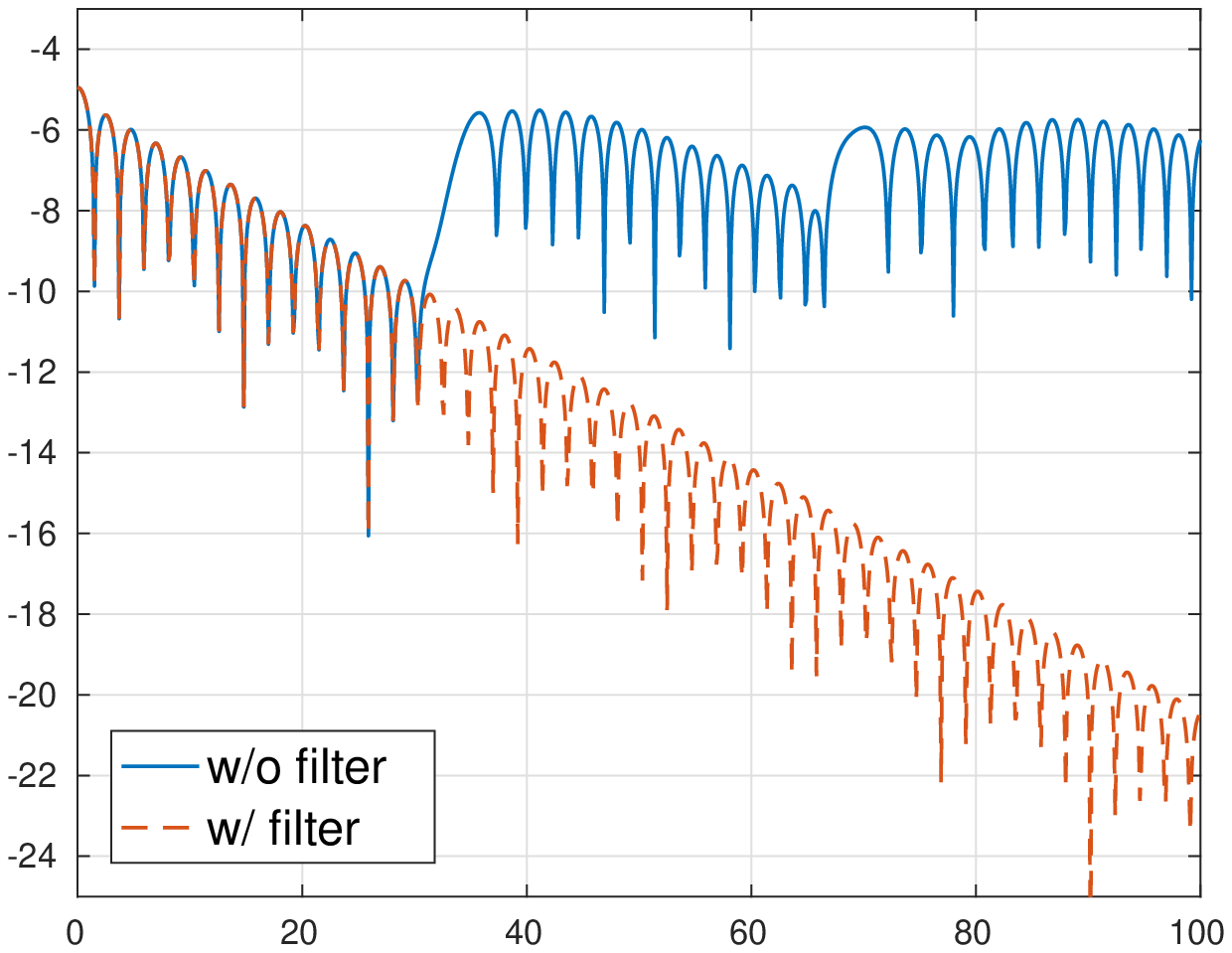}
    \end{overpic}
  }\quad
  \subfigure[$M=120$]{
    \begin{overpic}[height=5cm]{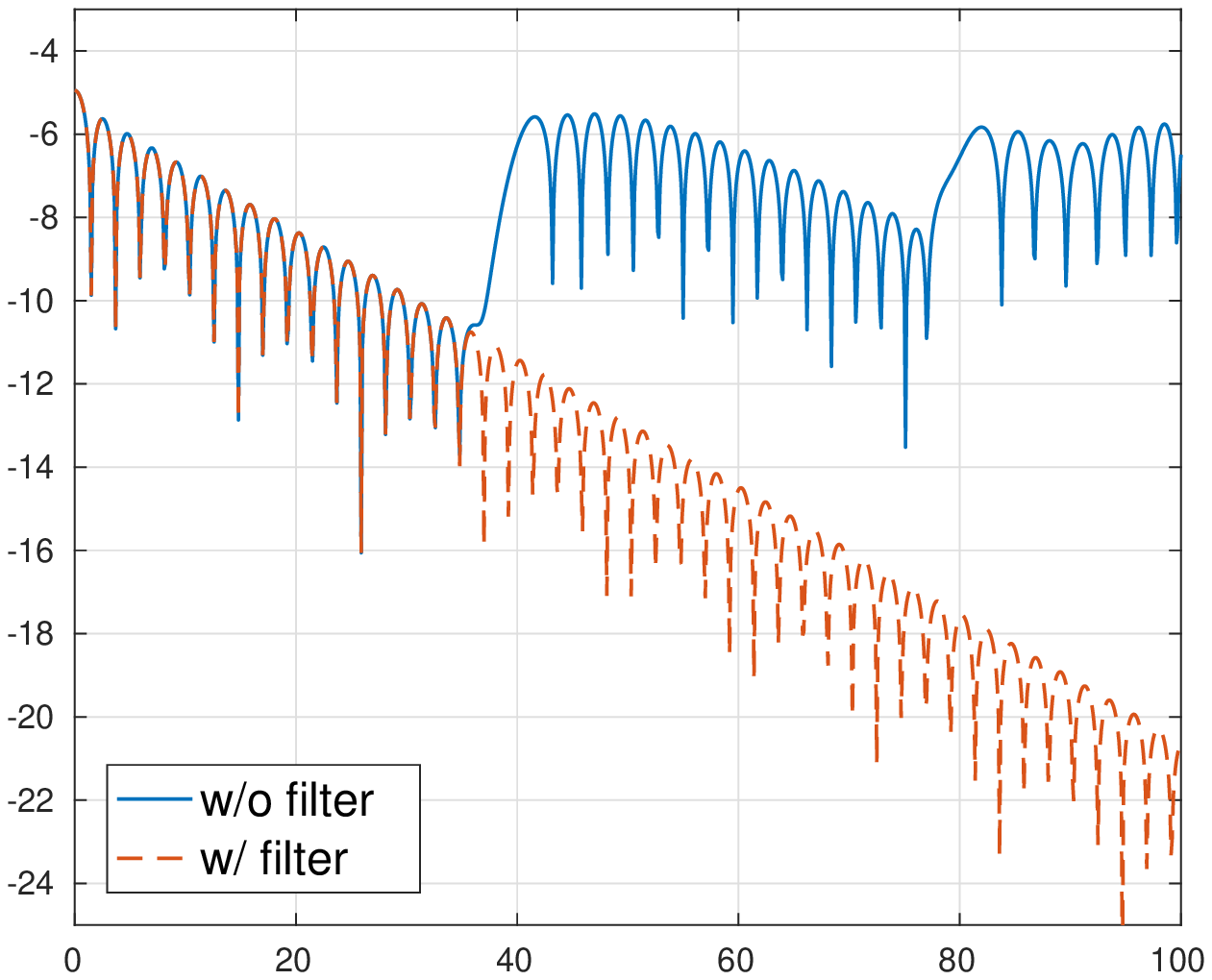}
    \end{overpic}
  } 
  \caption{The evolution of $\log \mE(t)$ for the Laudau damping problem.}
  \label{fig:ex3_logsol}
\end{figure}

\begin{table}[!ht]
  \centering
  \caption{The numerical decay rates of the electric energy}
  \label{tab:slope}
  \begin{tabular}{c|c|c|c||c|c|c|c}
    $M$ & w/o filter& w/ filter&  w/ filter  & $M$ & w/o filter& w/ filter&  w/ filter  \\
    \hline \hline
    $30$     &$t_F= 12$ &  $t_F = 12$& $t_F= 24$  &  $60$  &$t_F= 22$ &  $t_F = 22$& $t_F=44$  \\    
        & $0.155038$ & $0.1550545$ & $0.200223$ &    &  $0.1540681$ & $0.1540679$ & $0.166939$\\
    \hline
    $90$  & $t_F= 26$ & $t_F = 26$& $t_F=52$  &    $120$& $t_F= 30$ & $t_F = 30$& $t_F=60$  \\    
        & $0.154173$ & $0.154173$ & $0.152892$ &    & $0.153780$ & $0.153780$ & $0.153629$ \\
  \end{tabular}
\end{table}


\section{Conclusion} \label{sec:conclusion} 

In this paper, we have systematically analyzed the effect of the
filter on the numerical solutions to the transport equations using
Hermite-spectral method. The theoretical analysis on two types
of transport equations is proposed respectively. It is both
rigorously proven and numerically validated that the filter makes all
the non-constant modes damp exponentially, and therefore suppresses
the recurrence. The example of Landau damping shows that the filter
has only negligible effect on the damping rates of the electric
energy, which illustrates that adding filter is a promising method to
relieve the recurrence phenomenon. Analysis on the case with magnetic
field will be studied in the future.


\bibliographystyle{plain}
\bibliography{../article}
\end{document}